\theoremstyle{thmstyleone}%
\newtheorem{theorem}{Theorem}
\newtheorem{proposition}[theorem]{Proposition}%
\theoremstyle{thmstyletwo}%
\newtheorem{example}{Example}%
\newtheorem{remark}{Remark}%
\newtheorem{lemma}{Lemma}%
\theoremstyle{thmstylethree}%
\newtheorem{definition}{Definition}%
\begin{document}

\title[Article Title]{Solving maximally comonotone inclusion problems via an implicit Newton-like inertial dynamical system and its discretization}


\author[1]{\fnm{Zengzhen} \sur{Tan}}\email{zengzhentan@foxmail.com}

\author[2]{\fnm{Rong} \sur{Hu}}\email{ronghumath@aliyun.com}

\author*[1]{\fnm{Yaping} \sur{Fang}}\email{ypfang@scu.edu.cn}

\affil[1]{\orgdiv{Department of Mathematics}, \orgname{Sichuan University}, \orgaddress{\city{Chengdu}, \postcode{610065}, \state{Sichuan}, \country{China}}}

\affil[2]{\orgdiv{Department of Applied Mathematics}, \orgname{Chengdu University of Information Technology}, \orgaddress{\city{Chengdu}, \postcode{610225}, \state{Sichuan}, \country{China}}}

\abstract{This paper deals with an implicit Newton-like inertial dynamical system governed by a maximally comonotone inclusion problem in a Hilbert space. Under suitable conditions, we establish  not only  pointwise estimates  and integral estimates for the velocity and the value of  the associated Yosida regularization operator along the trajectory of the  system, but also the weak convergence of the trajectory to a zero of the maximally comonotone operator. Moreover, a new inertial algorithm is developed via a time discretization of the proposed system. Our analysis reveals that the resulting discrete algorithm  exhibits fast convergence properties  matching the ones of the continuous time counterpart. Finally,  the theoretical results are illustrated by  numerical experiments.}

\keywords{Inclusion problem; Maximally comonotone operator;  Implicit Newton-like inertial dynamical system; Inertial algorithm; Yosida regularization}

\pacs[MSC Classification]{37N40,  46N10,  49M30,  65K05,  65K10}

\maketitle

\section{Introduction}\label{sec1}

Let $\mathcal{H}$ be a real Hilbert space and $\mathcal{A}:\mathcal{H}\rightarrow 2^{\mathcal{H}}$ be a set-valued operator such that $\mbox{zer}  \mathcal{A}:= {\mathcal{A}}^{-1}(0)\neq \emptyset $. Designing continuous time dynamical systems and discrete algorithms with fast convergence properties to solve the inclusion problem:
\begin{equation}\label{problem}
\mbox{ Find } \quad x^*\in \mathcal{H}\quad \mbox{such that} \quad 0\in \mathcal{A}(x^*)
\end{equation}
is of great significance in many domains, including optimization, equilibrium theory, game theory and economics, partial differential equations, statistics and other disciplines. When $\mathcal{A}=\partial f$, the inclusion problem $(\ref{problem})$ becomes the  optimization problem
\begin{equation}\label{opp}
\min_{x\in \mathcal{H}}f(x),	
\end{equation}
where $f: \mathcal{H}\rightarrow \mathbb{R}\bigcup \{+\infty\}$ is a proper, convex and lower semi-continuous function and $\partial f$ denotes its sub-differential. 

There is a long history of using dynamical systems to solve the optimization problem $(\ref{opp})$. 
Let's start with the seminal work of the heavy ball with friction system proposed by Polyak \cite{Polyak}
\begin{eqnarray}\label{HVB}
		\ddot{x}(t)+\gamma \dot{x}(t)+\nabla f(x(t))=0, 
\end{eqnarray}
where $\gamma>0$ is  a fixed damping coefficient.  The explicit discretization scheme of \eqref{HVB} leads to the heavy ball method for the problem $(\ref{opp})$. The next major development in dynamical system approaches is due to Su et al. \cite{Su}, who introduced the inertial system of the form
\begin{equation}\label{Su}
\ddot{x}(t)+\frac{\alpha}{t}\dot{x}(t)+\bigtriangledown f(x(t))=0	
\end{equation}
for understanding  Nesterov's accelerated gradient method  \cite{Nesterov} for the problem $(\ref{opp})$, where $\alpha>0$.  Su et al. \cite{Su}  showed that the case $\alpha=3$  is corresponding to the continuous time limit of   Nesterov's accelerated gradient method  \cite{Nesterov}  and proved the fast convergence property $f(x(t))-\min_{\mathcal{H}}f=O(t^{-2})$ for \eqref{Su} with  $\alpha\geq 3$. For $\alpha>3$, Attouch et al. \cite{AttouchC} and May \cite{May} proved the weak convergence of the trajectory generated by $(\ref{Su})$ to a solution of \eqref{opp} and the improved convergence rate $f(x(t))-\min_{\mathcal{H}}f=o(t^{-2})$. Attouch et al. \cite{AttouchPR} further proposed the  inertial system with Hessian damping term 
\begin{equation}\label{Hes}
\ddot{x}(t)+\frac{\alpha}{t}\dot{x}(t)+\beta{\bigtriangledown}^2 f(x(t))\dot{x}(t)+{\bigtriangledown}f(x(t))=0,	
\end{equation}
where $\beta\geq 0$ and $f$ is a convex $\mathcal{C}^{2}$ function, which retains  convergence properties of \eqref{Su}.  In contrast to  \eqref{Su},  the addition of the Hessian driven damping term ${\bigtriangledown}^2 f(x(t))\dot{x}(t)$  makes the system \eqref{Hes} linked to the Newton method and to  enjoy additional favorable properties: Fast convergence of the gradients to zero and attenuation of the oscillations. See e.g. \cite{AttouchPR,Shib,AttouchF}.

Although the system  \eqref{Su} models  Nesterov's accelerated gradient method \cite{Nesterov}  from the continuous time limit perspective, it does not give  inertial gradient-type methods via a natural explicit/implicit discretization scheme.  To overcome this drawback, Alecsa et al. \cite{Alecsa} introduced the inertial dynamical system with an implicit Hessian damping term
\begin{equation}\label{im}
\ddot{x}(t)+\frac{\alpha}{t}\dot{x}(t)+\bigtriangledown f\left(x(t)+(\gamma+\frac{\beta}{t})\dot{x}(t)\right)=0,	
\end{equation}
where $\alpha>0, \beta\in \mathbb{R}$, and $\gamma\geq 0$. The term $\bigtriangledown f\left(x(t)+(\gamma+\frac{\beta}{t})\dot{x}(t)\right)$ in $(\ref{im})$ includes implicitly a Hessian damping term because by using the Taylor expansion,
 $$\bigtriangledown f\left(x(t)+(\gamma+\frac{\beta}{t}\right)\dot{x}(t))\approx \bigtriangledown f(x(t))+(\gamma+\frac{\beta}{t}){\bigtriangledown}^2f(x(t))\dot{x}(t).$$ 
It was shown in \cite{Alecsa}  that  \eqref{im} enjoys convergence properties similar to  \eqref{Hes}. In contrast to \eqref{Hes}, the system \eqref{im} owns the following advantages: (i) Its explicit discretization leads to inertial gradient-type methods. In particular, Nesterov's accelerated gradient method \cite{Nesterov} can be obtained via a natural discretization of  \eqref{im}. (ii) In numerical experiments, the trajectories generated by \eqref{im} have a better convergence behavior than the ones generated by \eqref{Hes}. It is worth mentioning that the work on inertial dynamical systems with implicit Hessian damping terms can date back to  Muehlebach and Jordan \cite{Muehlebach} who proposed 
\begin{equation}\label{im-MJ}
\ddot{x}(t)+\frac{2}{\sqrt{\kappa}+1}\dot{x}(t)+\frac{1}{L}\bigtriangledown f\left(x(t)+\frac{\sqrt{\kappa}-1}{\sqrt{\kappa}+1}\dot{x}(t)\right)=0	
\end{equation}
for minimizing  a  $\kappa$-strongly convex function  $f$ with $\nabla f$ being $L$-Lipschitz continuous. They showed that the accelerated gradient method
presented in \cite[p. 81]{Nesterov2004} results from a semi-implicit discretization of \eqref{im-MJ}.  Inspired by  the work of Alecsa et al. \cite{Alecsa},  Attouch et al. \cite{Attouch}  considered a perturbed version of  $(\ref{im})$, and
Alecsa et al. \cite{AlecsaL} and L\'aszl\'o \cite{Laszlo} investigated  Tikhonov regularized variants of $(\ref{im})$. For more results on inertial dynamical systems with implicit Hessian damping terms for the problem $(\ref{opp})$, we refer the reader to \cite{AttouchB,AlecsaR,AttouchN,Boffi} and the references therein. 

In past decades, dynamical system approaches were also extended to  solve the inclusion problem $(\ref{problem})$. Given a maximally monotone operator $\mathcal{A}$,   Attouch and Peypouquet \cite{AttouchP} considered the inertial dynamical systems with vanishing damping
 \begin{equation}\label{INS-ma}
\ddot{x}(t)+\frac{\alpha}{t}\dot{x}(t)+\mathcal{A}_{\lambda(t)}(x(t))=0,
\end{equation}
where  $\mathcal{A}_{\lambda(t)}$ is the Yosida regularization of  $\mathcal{A}$ with  a positive time-dependent  regularization parameter $\lambda(t)$.
Under suitable conditions, they proved that the trajectory $x(t)$ generated by \eqref{INS-ma} converges weakly to a zero of $\mathcal{A}$ and $\|\dot{x}(t)\|=O(1/t)$. By introducing an additional  Newton-like correction term to \eqref{INS-ma},  Attouch and L\'aszl\'o \cite{AttouchSC} proposed  the Newton-like inertial dynamical system
 \begin{equation}\label{sys6}
\ddot{x}(t)+\frac{\alpha}{t}\dot{x}(t)+b\frac{d}{dt}(\mathcal{A}_{\lambda(t)}(x(t)))+\mathcal{A}_{\lambda(t)}(x(t))=0,
\end{equation}
where the term $\frac{d}{dt}(\mathcal{A}_{\lambda(t)}(x(t)))$ is corresponding to the Hessian driven term ${\bigtriangledown}^2 f(x(t))\dot{x}(t)$ in \eqref{Hes}. It was shown in \cite[Theorem 2]{AttouchSC} that under suitable conditions, the trajectory of \eqref{sys6} converges weakly, along with the rate $\|\dot{x}(t)\|=o(1/t)$, to a zero of the maximal monotone operator  $\mathcal{A}$. By combining implicit Hessian damping with implicit Newton-like correction term, Adly et al. \cite{Adly} proposed the following inertial dynamical system 
\begin{equation}\label{B}
\ddot{x}(t)+\alpha\dot{x}(t)+\bigtriangledown f\left(x(t)+\beta_{f}\dot{x}(t)\right)+B\left(x(t)+\beta_b\dot{x}(t)\right)=0,
\end{equation}
where  $f: \mathcal{H}\rightarrow  \mathbb{R}$ is a continuously differentiable and convex function,  $B: \mathcal{H}\rightarrow \mathcal{H}$ is a monotone and  cocoercive operator, $\beta_{f}$ and $\beta_b$ are two nonnegative constants. Under certain conditions, they proved  the weak convergence $x(t)$ of the trajectory generated by \eqref{B} to  a solution of the structured monotone equation $\bigtriangledown f(x)+B(x)=0$.

Meanwhile, various new discrete algorithms were developed for solving the inclusion problem $\eqref{problem}$ with $\mathcal{A}$ being maximally monotone. See e.g. \cite{AttouchA,AttouchP,Attouchsc,Kim,Mainge}.  Some  discrete algorithms beyond the monotonicity of $\mathcal{A}$ were also proposed  for the inclusion problem $\eqref{problem}$ and the (weak) convergence of the iterative sequences generated by the algorithms were established. See e.g. \cite{CP,Iusem,Penn,BartzD,Kohlenbach,Cai,Liu,BartzC,BartzP,Quoc,Evens}.  Compared to the results on the convergence of the iterative sequences,  in  the literature it was limited to address the convergence rates of the algorithms.

Recently, Tan et al.  \cite{Tan} proposed a Newton-like inertial dynamical system for solving the inclusion problem \eqref{problem} with  $\mathcal{A}$ being a maximally  comonotone  operator (see Definition \ref{comonotone}), which is formulated as
\begin{equation}\label{ds}
\ddot{x}(t)+\frac{\alpha}{t}\dot{x}(t)+\frac{\beta}{t}\mathcal{A}_{\eta}(x(t))+\frac{d}{dt}\mathcal{A}_{\eta}(x(t))\dot{x}(t)=0,
\end{equation}	
where $\alpha>0$ and $\beta>0$.
To the best of our knowledge, this is the first dynamical system in the literature for the  maximally  comonotone  inclusion problem. Under certain conditions, they proved the weak convergence of the trajectory $x(t)$ generated by \eqref{ds} to a zero of the maximally  comonotone operator $\mathcal{A}$, along with   $\|\dot{x}(t)\|=o(1/t)$ and  $\|\mathcal{A}_{\eta}(x(t))\|=o(1/t)$. Further,  Tan et al.  \cite{Tan} also developed  via time discretization of \eqref{ds}  the following  inertial algorithm
\begin{equation}\label{Al-cns}
\left\{
\begin{array}{rcl}
y_{k}&=&x_k+(1-\frac{\alpha}{k})(x_k-x_{k-1})+(1-\frac{\beta}{k})(y_{k-1}-x_k),\\
x_{k+1}&=&(1-\frac{1}{\eta+1})y_k+ \frac{1}{\eta+1}J_{\eta+1}^{\mathcal{A}}y_k,
\end{array}
\right.
\end{equation}
where $J_{\eta+1}^{\mathcal{A}}$ is the resolvent operator of $\mathcal{A}$ with index $\eta+1$, and proved that under certain conditions the algorithm \eqref{Al-cns} exhibits convergence properties matching the ones of \eqref{ds}.  

Inspired by \cite{Alecsa,Adly,Tan}, in this paper, we propose the following implicit Newton-like inertial dynamical system for  finding a zero of the maximally  comonotone operator $\mathcal{A}$:
\begin{equation}\label{DS}
\ddot{x}(t)+\frac{\alpha}{t}\dot{x}(t)+\frac{\beta}{t}\mathcal{A}_{\eta}\left(x(t)+\frac{t}{\gamma}\dot{x}(t)\right)=0,
\end{equation}	 
where $t\ge t_0>0$ and $\alpha, \beta, \eta,\gamma >0$. The third term in the left side of \eqref{DS} includes implicitly a  Newton-like correction term. Indeed, by using the Taylor expansion we get
$$\mathcal{A}_{\eta}\left(x(t))+\frac{t}{\gamma}\dot{x}(t)\right)\approx \mathcal{A}_{\eta}(x(t))+\frac{t}{\gamma}\frac{d}{dt}\mathcal{A}_{\eta}(x(t))\dot{x}(t).$$
In this sense, the system \eqref{DS} is strongly related to \eqref{ds}.

In this paper we also consider the following  discretization scheme of \eqref{DS} with the constant stepsize $1$: 
$$x_{n+1}-2x_n+x_{n-1}+\frac{\alpha}{n}(x_n-x_{n-1})+\frac{\beta}{n}\mathcal{A}_{\eta}\left(x_n+\frac{n}{\gamma}(x_n-x_{n-1})\right)=0,$$
which can be rewritten as
\begin{equation}\label{al}
 x_{n+1}=x_n+(1-\frac{\alpha}{n})(x_n-x_{n-1})-\frac{\beta}{n}\mathcal{A}_{\eta}\left(x_n+\frac{n}{\gamma}(x_n-x_{n-1})\right),	
\end{equation}
or equivalently,
\begin{equation}\label{ale}
\left\{
\begin{array}{rcl}
y_{n}&=&x_n+(1-\frac{\alpha}{n})(x_n-x_{n-1}),\\
z_n&=&x_n+\frac{n}{\gamma}(x_{n}-x_{n-1}),\\
x_{n+1}&=& y_n-\frac{\beta}{n}\mathcal{A}_{\eta}(z_n),
\end{array}
\right.
\end{equation}
where  $x_0$, $x_{-1}\in \mathcal{H}$. Notice that the algorithm \eqref{ale} is quite different from \eqref{Al-cns} although its continuous time counterpart  \eqref{DS} is very similar to \eqref{ds}. We will show that the system \eqref{DS} enjoys fast convergence properties similar to  \eqref{ds}  and that \eqref{ale} owns  similar theoretical convergence properties and but better performance in numerical experiments over \eqref{Al-cns}.

Given a maximally comonotone operator $\mathcal{A}$,  let $x(t)$ be  the trajectory of  \eqref{DS},  and let $x_n$ and $z_n$ be the sequences generated by \eqref{ale}. Our main results are summarized as follows:  Under certain conditions, we have
\begin{itemize}
\item[]$\bullet$ Integral estimates:
\begin{enumerate}
\item[(i)] $\int_{t_0}^{+\infty}t\|\dot{x}(t)\|^2dt<+\infty$ and $\int_{t_0}^{+\infty} t\|\mathcal{A}_{\eta}(x(t)+\frac{t}{\gamma}\dot{x}(t))\|^2 dt <+\infty$.
\item[(ii)] $\sum_{n=1}^{+\infty} n\|x_n-x_{n-1}\|^2<+\infty$,  $\sum_{n=1}^{+\infty} n\|\mathcal{A}_{\eta}(z_{n})\|^2<+\infty$ and 
$\sum_{n=1}^{+\infty} n^2\|\mathcal{A}_{\eta}(z_{n})-\mathcal{A}_{\eta}(z_{n+1})\|^2<+\infty$.
\end{enumerate}

\item[]$\bullet$ Point estimates:
\begin{enumerate}
\item[(i)] $\|\dot{x}(t)\|=o(\frac{1}{t})$ and $\|\mathcal{A}_{\eta}(x(t)+\frac{t}{\gamma}\dot{x}(t))\|=o(\frac{1}{t})$ as $t\rightarrow +\infty$.
\item[(ii)]$\|x_n-x_{n-1}\|=o(\frac{1}{n})$ and $\|\mathcal{A}_{\eta}(z_n)\|=o(\frac{1}{n})$ as $n\rightarrow +\infty$.
\end{enumerate}

\item[]$\bullet$ Weak convergence:
\begin{enumerate}
\item[(i)]  The trajectory $x(t)$ converges weakly, as $t\rightarrow +\infty$, to an element of $\mbox{zer} \mathcal{A}$.
\item [(ii)] The sequence $\{x_n\}$ converges weakly, as $n\rightarrow +\infty$, to an element of $\mbox{zer} \mathcal{A}$.
\end{enumerate}
\end{itemize}

The article is structured as follows: Section \ref{sec2} gives some basic definitions and results which will be  used in our convergence analysis.  In Section \ref{sec3}, we first give  a short discussion concerning the existence and uniqueness of the trajectories generated by \eqref{DS}, then we establish the asymptotical analysis of \eqref{DS}.   Section \ref{sec4} is devoted to the  convergence analysis of the algorithm \eqref{al}. Section \ref{sec5} presents some simulation examples that illustrate the theoretical results on \eqref{DS} and \eqref{al}. In Section \ref{sec6}, we conclude our paper.

\section{Notations and preliminaries}\label{sec2}

Throughout, we will employ standard notations adopted in \cite{BauschkeC}:  $\mathcal{H}$ is a real Hilbert space with the inner product $\langle \cdot, \cdot \rangle$ and the associated norm $\|\cdot\|$. The sets $\mathbb{N}$, $\mathbb{R}$, $\mathbb{R}_{+}$ and $\mathbb{R}_{++}$ denote the set of nonnegative integers, the set of real number, the set of nonnegative real numbers, and the set of positive real numbers, respectively. The notation $\mathcal{A}:\mathcal{H}\rightarrow 2^{\mathcal{H}}$ indicates that $\mathcal{A}$ is a set-valued operator on $\mathcal{H}$ and the notation $\mathcal{A}:\mathcal{H}\rightarrow \mathcal{H}$ indicates that $\mathcal{A}$ is a single-valued operator on $\mathcal{H}$. Given $\mathcal{A}:\mathcal{H}\rightarrow 2^{\mathcal{H}}$,  $\text{ dom } \mathcal{A}:=\{x\in \mathcal{H}|\mathcal{A}x\neq \emptyset\}$ denotes  its domain, $\mbox{ gra }\mathcal{A}:=\{(x,u)\in \mathcal{H}\times \mathcal{H}|u\in \mathcal{A}x\}$ denotes its graph, and $\mbox{ zer } \mathcal{A}:=\{x\in \mathcal{H}|0\in \mathcal{A}x\}$ denotes the set of its zeros. The resolvent operator of $ \mathcal{A}$  with index $\eta>0$ is defined  by $J_{\eta}^{\mathcal{A}}:=(Id+\eta \mathcal{A})^{-1}$ and  the Yosida regularization of $ \mathcal{A}$ with parameter $\eta>0$ is defined by $\mathcal{A}_{\eta}:=\frac{1}{\eta}(Id-J_{\eta}^{\mathcal{A}})$.

\begin{definition} \cite{BauschkeC}\label{cocoercive}
 Let $T:\mathcal{H}\rightarrow \mathcal{H}$,  $\theta\in (0,1)$, and  $\beta\in \mathbb{R}_{++}$. We say that
 \begin{enumerate}
 \item[(i)] $T$ is nonexpansive if $\|Tx-Ty\|\leq \|x-y\|,\quad \forall x,y\in \mathcal{H}$.
 \item[(ii)]$T$ is $\theta-$averaged if there exists a nonexpansive operator $N:\mathcal{H}\rightarrow \mathcal{H}$ such that $T=(1-\theta)Id+\theta N$, i.e., 
 $$(1-\theta)\|(Id-T)x-(Id-T)y\|^2\leq \theta(\|x-y\|^2-\|Tx-Ty\|^2),\quad\forall x, y\in \mathcal{H}.$$
 \item[(iii)]$T$ is $\beta-$cocoercive if 
 $$\langle x-y,Tx-Ty\rangle \geq \beta \|Tx-Ty\|^2,\quad \forall x, y\in \mathcal{H}.$$	
 \end{enumerate}
\end{definition}
 
\begin{definition}  \cite[Definition 2.3] {BauschkeMW} \label{comonotone}
 Let $\mathcal{A}:\mathcal{H}\rightarrow 2^{\mathcal{H}}$ and $\rho\in \mathbb{R}$. We say that
  \begin{enumerate}
 \item[(i)] $\mathcal{A}$ is $\rho-$monotone if 
 $$\langle x-y,u-v\rangle \geq \rho \|x-y\|^2,\quad \forall (x,u), (y,v)\in \text{ gra }\mathcal{A}.$$

\item[(ii)] $\mathcal{A}$ is maximally $\rho-$monotone if $\mathcal{A}$ is $\rho-$monotone and there is no other $\rho-$monotone operator $\mathcal{B}:\mathcal{H}\rightarrow 2^{\mathcal{H}}$ such that $\text{ gra } \mathcal{B}$ properly contains $\text{ gra }\mathcal{A}$.\\ 

 \item[(iii)] $\mathcal{A}$ is $\rho-$comonotone if
$$\langle x-y,u-v\rangle \geq \rho \|u-v\|^2,\quad  \forall (x,u),  (y,v)\in \mbox{ gra }\mathcal{A}.$$
 
\item[(iv)] $\mathcal{A}$ is maximally $\rho-$comonotone if $\mathcal{A}$ is $\rho-$comonotone and there is no other $\rho-$comonotone operator $\mathcal{B}:\mathcal{H}\rightarrow 2^{\mathcal{H}}$ such that $\text{ gra }\mathcal{B}$ properly contains $\text {gra }\mathcal{A}$.
 \end{enumerate}
\end{definition}





\begin{remark} \cite[Remark 2.4]{BauschkeMW}\label{hypo}
\begin{enumerate}
\item[(i)] When $\rho=0$, both $\rho-$monotonicity of $\mathcal{A}$ and $\rho-$comonotonicity of $\mathcal{A}$ reduce to the monotonicity of $\mathcal{A}$; equivalently to the monotonicity of ${\mathcal{A}}^{-1}$.
\item[(ii)] When $\rho<0$, $\rho-$monotonicity is know as $\rho-$hypomonotonicity, see \cite[Example 12.28] {RockafellarWets} and \cite [Definition 6.9.1] {Burachik}. In this case, the $\rho-$comonotonicity is also known as $\rho-$cohypomonotonicity (see \cite[Definition2.2]{CP}).
\item[(iii)] In passing, we point out that when $\rho>0$, $\rho-$monotonicity of  $\mathcal{A}$ reduces to $\rho-$strong monotonicity of $\mathcal{A}$, while $\rho-$comonotonicity of $\mathcal{A}$ reduces to $\rho-$cocoercivity of $\mathcal{A}$.
\end{enumerate}	
\end{remark}
 

\begin{remark}\label{av}  \cite [Remark 2.2] {Tan}
Set $\eta \mathcal{A}=T^{-1}-Id$, $i.e.$, $T=J_{\eta}^{\mathcal{A}}$, and set $\rho=(\frac{1}{2\theta}-1)\eta>-\frac{\eta}{2}$, where $\eta>0$. Then we have: $T$ is $\theta-$averaged $\Leftrightarrow$ $\mathcal{A}$ is $\rho-$comonotone. Furthermore, the crucial relation between $\mathcal{A}$ and $\mathcal{A}_{\eta}$ is that $\mbox{zer} \mathcal{A}=\mbox{zer} \mathcal{A}_{\eta}$, which does not depend on the choice of $\eta> 0$(See \cite[Proposition 23.2] {BauschkeC}  and \cite [Proposition 2.13] {BauschkeMW}).
\end{remark}

\begin{proposition} \cite [Proposition 3.7] {BartzD} \label{single}
Let $\mathcal{A}:\mathcal{H}\rightarrow 2^{\mathcal{H}}$ be $\rho-$comonotone and let $\eta\in \mathbb{R}_{++}$ such that $\rho+\eta>0$. Then
\begin{enumerate}
\item[(i)] $J_{\eta}^{\mathcal{A}}$ is single-valued.
\item[(ii)]  $\mathcal{A}$ is maximally $\rho-$comonotone   if and only if  $\mbox{dom} J_{\eta}^{\mathcal{A}}=\mathcal{H}$.	
\end{enumerate}
\end{proposition}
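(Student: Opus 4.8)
The plan is to derive all three assertions from the defining inequality of Definition \ref{comonotone}(iii) together with the hypothesis $\rho+\eta>0$, invoking a reduction to classical maximal monotonicity only where an existence (surjectivity) statement is genuinely needed.

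\emph{Single-valuedness (i).} I would argue directly. Suppose $p,q\in J_\eta^{\mathcal{A}}(x)$ for some $x$. By definition of the resolvent this means $\tfrac1\eta(x-p)\in\mathcal{A}p$ and $\tfrac1\eta(x-q)\in\mathcal{A}q$, so the pairs $(p,\tfrac1\eta(x-p))$ and $(q,\tfrac1\eta(x-q))$ lie in $\mathrm{gra}\,\mathcal{A}$. Feeding them into $\rho$-comonotonicity and expanding the inner product yields $-\tfrac1\eta\|p-q\|^2\ge\tfrac{\rho}{\eta^2}\|p-q\|^2$, i.e. $(\rho+\eta)\|p-q\|^2\le 0$. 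Since $\rho+\eta>0$ this forces $p=q$, so $J_\eta^{\mathcal{A}}$ is single-valued on its domain.

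\emph{Sufficiency in (ii).} Assume $\mathrm{dom}\,J_\eta^{\mathcal{A}}=\mathcal{H}$, equivalently that $Id+\eta\mathcal{A}$ is surjective, and suppose for contradiction that some $\rho$-comonotone $\mathcal{B}$ properly extends $\mathcal{A}$, with $(x_0,u_0)\in\mathrm{gra}\,\mathcal{B}\setminus\mathrm{gra}\,\mathcal{A}$. Surjectivity applied to the point $x_0+\eta u_0$ produces $(p,u)\in\mathrm{gra}\,\mathcal{A}\subseteq\mathrm{gra}\,\mathcal{B}$ with $p+\eta u=x_0+\eta u_0$, hence $p-x_0=-\eta(u-u_0)$. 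Inserting $(p,u)$ and $(x_0,u_0)$ into the comonotonicity inequality for $\mathcal{B}$ again gives $(\rho+\eta)\|u-u_0\|^2\le 0$, so $u=u_0$ and then $p=x_0$, contradicting $(x_0,u_0)\notin\mathrm{gra}\,\mathcal{A}$. Thus $\mathcal{A}$ is maximal. This is essentially the same one-line computation as in (i), reused in a maximality argument.

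\emph{Necessity in (ii) — the main obstacle.} The difficult direction is producing surjectivity of $Id+\eta\mathcal{A}$ from maximality alone, an existence statement equivalent to a Minty-type theorem. My plan is to pass to $\mathcal{B}:=\mathcal{A}^{-1}-\rho\,Id$. The change of variable $w=\tfrac1\eta(y-p)$ shows that $y\in\mathrm{ran}(Id+\eta\mathcal{A})$ iff $y\in(\eta\,Id+\mathcal{A}^{-1})w$ for some $w$, so $\mathrm{dom}\,J_\eta^{\mathcal{A}}=\mathrm{ran}(Id+\eta\mathcal{A})=\mathrm{ran}\bigl((\rho+\eta)Id+\mathcal{B}\bigr)$. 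Directly from Definition \ref{comonotone}, $\mathcal{A}$ is $\rho$-comonotone iff $\mathcal{B}$ is monotone; and since inversion $(x,u)\mapsto(u,x)$ and the shear $(u,x)\mapsto(u,x-\rho u)$ are inclusion-preserving bijections carrying $\rho$-comonotone operators onto monotone operators, $\mathcal{A}$ is maximally $\rho$-comonotone iff $\mathcal{B}$ is maximally monotone. Then, because $\rho+\eta>0$, the classical Minty theorem (maximal monotonicity of $\mathcal{B}$ is equivalent to $\mathrm{ran}(Id+\lambda\mathcal{B})=\mathcal{H}$ for every $\lambda>0$) gives $\mathrm{ran}\bigl((\rho+\eta)Id+\mathcal{B}\bigr)=\mathcal{H}$, whence $\mathrm{dom}\,J_\eta^{\mathcal{A}}=\mathcal{H}$. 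The two delicate points I would check carefully are that the graph transformations are genuine inclusion-preserving bijections (so that maximal elements correspond to maximal elements) and the precise bookkeeping in the change of variables identifying the three ranges. Note that the broader range $\rho>-\eta$ here prevents a direct appeal to Remark \ref{av}, whose averagedness correspondence only covers $\rho>-\tfrac\eta2$, which is why the reduction through $\mathcal{B}$ is preferable.
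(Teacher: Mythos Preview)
Your argument is correct in all three parts. However, there is nothing to compare against: the paper does not prove Proposition~\ref{single} at all, but simply quotes it from \cite[Proposition~3.7]{BartzD}. Your reduction in the necessity direction via $\mathcal{B}:=\mathcal{A}^{-1}-\rho\,Id$ is precisely the equivalence recorded in Proposition~\ref{z}(i) of the paper (also cited, not proved), and is indeed the standard route to Minty's theorem in this setting; the computations you give for (i) and for the sufficiency half of (ii) are clean and complete.
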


By Remark $\ref{av}$ and Proposition $\ref{single}$, we can reasonably assume that for $\eta>\max\{-2\rho,0\}$, the resolvent $J_{\eta}^{\mathcal{A}}$ and the Yosida regularization $\mathcal{A}_{\eta}$ are single-valued and averaged when $\mathcal{A}$ is $\rho$-comonotone.

\begin{proposition} \cite[Proposition 2.3] {Tan}\label{z}
Suppose that $\mathcal{A}:\mathcal{H}\rightarrow 2^{\mathcal{H}}$ is a $\rho-$comonotone operator and $\eta>\max\{-2\rho,0\}$. For any $x,y \in \mathcal{H}$, the following conclusions hold:
\begin{enumerate}
\item[(i)]$\mathcal{A}$ is maximally $\rho-$comonotone $\Leftrightarrow$ ${\mathcal{A}}^{-1}-\rho Id$ is maximally monotone.
\item[(ii)]$\mathcal{A}$ is maximally $\rho-$comonotone $\Leftrightarrow$ $\mathcal{A}_{\eta}$ is $(\rho+\eta)-$cocoercive.
\item[(iii)]$J_{\eta}^{\mathcal{A}}: \mathcal{H}\rightarrow \mathcal{H}$ is $\frac{\eta}{2(\rho+\eta)}-$averaged and ${\mathcal{A}}_{\eta}:  \mathcal{H}\rightarrow \mathcal{H}$ is $\frac{1}{\rho+\eta}-$Lipschitz continuous.
\item[(iv)]$\xi\in J_{\eta}^{\mathcal{A}}x \Leftrightarrow (\xi, {\eta}^{-1}(x-\xi))\in \mbox{gra}\mathcal{A}$.
\end{enumerate} 
\end{proposition}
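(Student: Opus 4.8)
The plan is to handle the four items in an order that lets each rest on the previous, first establishing the elementary resolvent identity, then the order-preserving correspondence behind the two maximality statements, and finally reading off the quantitative estimates from the cited characterizations. To begin, I would dispatch (iv), which is nothing but the defining relation of the resolvent and uses no monotonicity: since $J_{\eta}^{\mathcal{A}}=(Id+\eta\mathcal{A})^{-1}$, we have $\xi\in J_{\eta}^{\mathcal{A}}x\Leftrightarrow x\in\xi+\eta\mathcal{A}\xi\Leftrightarrow\eta^{-1}(x-\xi)\in\mathcal{A}\xi\Leftrightarrow(\xi,\eta^{-1}(x-\xi))\in\mathrm{gra}\,\mathcal{A}$. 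This identity will be reused in (ii).

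For (i), the key device is the linear bijection $\Psi:\mathcal{H}\times\mathcal{H}\to\mathcal{H}\times\mathcal{H}$, $\Psi(x,u)=(u,x-\rho u)$, whose action on graphs carries $\mathrm{gra}\,\mathcal{A}$ exactly onto $\mathrm{gra}(\mathcal{A}^{-1}-\rho Id)$. Rewriting the $\rho$-comonotonicity inequality $\langle x-y,u-v\rangle\ge\rho\|u-v\|^2$ in the equivalent form $\langle u-v,(x-\rho u)-(y-\rho v)\rangle\ge 0$ shows that $\mathcal{A}$ is $\rho$-comonotone if and only if $\mathcal{A}^{-1}-\rho Id$ is monotone. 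Because $\Psi$ is a fixed bijection of $\mathcal{H}\times\mathcal{H}$, it preserves graph inclusion, so $\mathcal{A}\mapsto\mathcal{A}^{-1}-\rho Id$ is an order isomorphism between the $\rho$-comonotone operators and the monotone operators; hence maximality of one is equivalent to maximality of the other. I expect this maximality transfer to be the step demanding the most care, since it is precisely here that ``$\mathcal{A}$ admits no proper $\rho$-comonotone extension'' must be matched against ``$\mathcal{A}^{-1}-\rho Id$ admits no proper monotone extension,'' and one has to argue that the correspondence restricts cleanly to the two classes.

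For (ii), assume first $\mathcal{A}$ is maximally $\rho$-comonotone. By Proposition \ref{single}(ii) this gives $\mathrm{dom}\,J_{\eta}^{\mathcal{A}}=\mathcal{H}$, so $\mathcal{A}_{\eta}$ is everywhere defined and single-valued. Setting $\xi=J_{\eta}^{\mathcal{A}}x$, $\zeta=J_{\eta}^{\mathcal{A}}y$ and invoking (iv), $\rho$-comonotonicity applied to $(\xi,\eta^{-1}(x-\xi))$ and $(\zeta,\eta^{-1}(y-\zeta))$ yields $\langle\xi-\zeta,\mathcal{A}_{\eta}x-\mathcal{A}_{\eta}y\rangle\ge\rho\|\mathcal{A}_{\eta}x-\mathcal{A}_{\eta}y\|^2$; substituting $\xi-\zeta=(x-y)-\eta(\mathcal{A}_{\eta}x-\mathcal{A}_{\eta}y)$ and rearranging gives the desired $(\rho+\eta)$-cocoercivity. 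Conversely, if $\mathcal{A}_{\eta}$ is $(\rho+\eta)$-cocoercive then, as a cocoercive operator in the sense of Definition \ref{cocoercive}, it is defined on all of $\mathcal{H}$, whence $\mathrm{dom}\,J_{\eta}^{\mathcal{A}}=\mathcal{H}$ and Proposition \ref{single}(ii) returns maximal $\rho$-comonotonicity.

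Finally, (iii) follows by assembling what is already in hand. The averagedness constant is obtained directly from Remark \ref{av}: $T=J_{\eta}^{\mathcal{A}}$ is $\theta$-averaged with $\rho=(\tfrac{1}{2\theta}-1)\eta$, and solving gives $\theta=\frac{\eta}{2(\rho+\eta)}$, which indeed lies in $(0,1)$ because $\eta>\max\{-2\rho,0\}$ forces $\rho+\eta>\eta/2>0$. For the Lipschitz bound I would note that the pointwise cocoercivity inequality derived in (ii) uses only comonotonicity on $\mathrm{dom}\,J_{\eta}^{\mathcal{A}}$, so it holds without invoking maximality; combining it with Cauchy--Schwarz, $(\rho+\eta)\|\mathcal{A}_{\eta}x-\mathcal{A}_{\eta}y\|^2\le\langle x-y,\mathcal{A}_{\eta}x-\mathcal{A}_{\eta}y\rangle\le\|x-y\|\,\|\mathcal{A}_{\eta}x-\mathcal{A}_{\eta}y\|$, and dividing by $\|\mathcal{A}_{\eta}x-\mathcal{A}_{\eta}y\|$ yields $\|\mathcal{A}_{\eta}x-\mathcal{A}_{\eta}y\|\le\frac{1}{\rho+\eta}\|x-y\|$, as claimed.
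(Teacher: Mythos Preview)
Your proof is correct and self-contained, but there is no paper proof to compare it against: the paper simply cites this proposition from \cite[Proposition 2.3]{Tan} without argument. Your treatment of (iv) via the resolvent definition, (i) via the graph bijection $(x,u)\mapsto(u,x-\rho u)$, (ii) via (iv) together with Proposition~\ref{single}, and (iii) via Remark~\ref{av} and the cocoercivity--Lipschitz implication is standard and sound; in particular, your observation that the pointwise cocoercivity inequality in (ii) requires only $\rho$-comonotonicity on $\mathrm{dom}\,J_\eta^{\mathcal{A}}$ (not maximality) is exactly what is needed to make (iii) hold under the stated hypotheses.
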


\begin{proposition}\label{der}
Let $\mathcal{A}:\mathcal{H}\rightarrow 2^{\mathcal{H}}$ be maximally $\rho-$comonotone, $\eta>\max\{-2\rho,0\}$, and let $x(t)$ be a differentiable function. Then 
$$\langle \dot{x}(t), \frac{d}{dt}\mathcal{A}_{\eta}x(t)\rangle \geq 0.$$
\end{proposition}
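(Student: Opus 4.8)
The plan is to reduce the claim to the mere (co)monotonicity of the single-valued operator $\mathcal{A}_{\eta}$ together with an elementary difference-quotient argument. First I would record that the hypothesis $\eta>\max\{-2\rho,0\}$ forces $\rho+\eta>0$: the condition $\eta>0$ handles the case $\rho\geq 0$, while $\eta>-2\rho$ gives $\rho+\eta>-\rho>0$ when $\rho<0$. By Proposition \ref{z}(ii), $\mathcal{A}_{\eta}$ is therefore $(\rho+\eta)$-cocoercive, and since $\rho+\eta>0$ this in particular yields the monotonicity estimate
$$\langle u-v,\ \mathcal{A}_{\eta}u-\mathcal{A}_{\eta}v\rangle\ \geq\ (\rho+\eta)\,\|\mathcal{A}_{\eta}u-\mathcal{A}_{\eta}v\|^{2}\ \geq\ 0\qquad\forall\,u,v\in\mathcal{H}.$$

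Next, for each fixed $t$ and each $s\neq 0$ I would apply this inequality with $u=x(t+s)$ and $v=x(t)$, and then divide by $s^{2}>0$. Because the sign of $s$ is irrelevant (only $s^{2}$ appears), the difference quotient
$$\left\langle \frac{x(t+s)-x(t)}{s},\ \frac{\mathcal{A}_{\eta}x(t+s)-\mathcal{A}_{\eta}x(t)}{s}\right\rangle\ \geq\ 0$$
holds for every $s\neq 0$. Letting $s\to 0$, the first factor tends to $\dot{x}(t)$ and the second to $\frac{d}{dt}\mathcal{A}_{\eta}x(t)$ by differentiability; since strong convergence of both arguments passes through the inner product (joint continuity of $\langle\cdot,\cdot\rangle$ on $\mathcal{H}\times\mathcal{H}$), the left-hand side converges to $\langle\dot{x}(t),\frac{d}{dt}\mathcal{A}_{\eta}x(t)\rangle$. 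A limit of nonnegative numbers is nonnegative, which is exactly the assertion.

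The one delicate point I expect is the existence of $\frac{d}{dt}\mathcal{A}_{\eta}x(t)$ itself: by Proposition \ref{z}(iii) the map $\mathcal{A}_{\eta}$ is only Lipschitz continuous, so the composition $t\mapsto\mathcal{A}_{\eta}x(t)$ need not be differentiable at every $t$ even when $x$ is. This is where the argument must be read carefully. I would handle it by observing that the proposition is used along the trajectory of \eqref{DS}, where the required regularity of $t\mapsto\mathcal{A}_{\eta}x(t)$ is available from the existence/uniqueness discussion, or, failing that, by interpreting the inequality at the (almost every) points where the derivative exists; the subsequent passage to the limit in the inner product is then entirely routine. Everything else in the argument is algebraic once cocoercivity, hence monotonicity, of $\mathcal{A}_{\eta}$ is in hand.
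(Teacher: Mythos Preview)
Your argument is correct, and it takes a genuinely different route from the paper's. The paper expands $\mathcal{A}_{\eta}x(t)=\frac{1}{\eta}(x(t)-J_{\eta}^{\mathcal{A}}x(t))$, applies Cauchy--Schwarz to $\langle\dot{x}(t),\dot{x}(t)-\tfrac{d}{dt}J_{\eta}^{\mathcal{A}}x(t)\rangle$, and then uses the nonexpansiveness of $J_{\eta}^{\mathcal{A}}$ (from Proposition~\ref{z}(iii)) to get $\|\tfrac{d}{dt}J_{\eta}^{\mathcal{A}}x(t)\|\leq\|\dot{x}(t)\|$. You instead go straight through the $(\rho+\eta)$-cocoercivity of $\mathcal{A}_{\eta}$ (Proposition~\ref{z}(ii)), observe that this gives monotonicity, and pass a difference quotient to the limit. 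Your approach is arguably cleaner and more conceptual: it isolates monotonicity of $\mathcal{A}_{\eta}$ as the sole ingredient and avoids the detour through the resolvent and Cauchy--Schwarz; it even yields the sharper bound $\langle\dot{x}(t),\tfrac{d}{dt}\mathcal{A}_{\eta}x(t)\rangle\geq(\rho+\eta)\|\tfrac{d}{dt}\mathcal{A}_{\eta}x(t)\|^{2}$ for free. The paper's approach, by contrast, keeps the resolvent visible, which is the object actually computed in the discrete algorithm. Your caveat about the existence of $\tfrac{d}{dt}\mathcal{A}_{\eta}x(t)$ is well taken and applies equally to the paper's proof, which tacitly assumes $\tfrac{d}{dt}J_{\eta}^{\mathcal{A}}x(t)$ exists; in both cases the inequality should be read at almost every $t$, which suffices for the use made of it in Theorem~\ref{Th-s}.
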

\begin{proof}
In light of $\mathcal{A}_{\eta}x(t)=\frac{x(t)-J_{\eta}^\mathcal{A}x(t)}{\eta}$ and by means of the Cauchy-Schwartz inequality, we have 
\begin{eqnarray*}
&&\langle \dot{x}(t), \frac{d}{dt}\mathcal{A}_{\eta}x(t)\rangle	\\
&=&\frac{1}{\eta}\langle \dot{x}(t), \dot{x}(t)-\frac{d}{dt}J_{\eta}^\mathcal{A}x(t)\rangle\\
&\geq&\frac{1}{\eta}\|\dot{x}(t)\|^2-\frac{1}{\eta}\| \dot{x}(t)\|\cdot \|\frac{d}{dt}J_{\eta}^\mathcal{A}x(t)\|.
\end{eqnarray*}
Since $J_{\eta}^\mathcal{A}$ is nonexpansive (by $(iii)$ of Proposition \ref{z}), we have $\|\frac{d}{dt}J_{\eta}^\mathcal{A}x(t)\|\leq\|\dot{x}(t)\|$.  This together the above inequality yields the desired result.
\end{proof}

\begin{proposition} \cite[Proposition 2.4] {Tan}\label{maximal}
Let $\mathcal{A}:\mathcal{H}\rightarrow 2^{\mathcal{H}}$ be a maximally $\rho-$comonotone operator, $(x_n,u_n)_{n\in \mathbb{N}}$ in $\text{gra}\mathcal{A}$ and  $(x,u)\in \mathcal{H} \times \mathcal{H}$. If $x_n\rightharpoonup x$ and $u_n\longrightarrow u$, then $(x,u)\in \mbox{gra} \mathcal{A}$.
\end{proposition}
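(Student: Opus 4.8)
The plan is to use the maximality of $\mathcal{A}$ directly: I would show that the limit pair $(x,u)$ satisfies the $\rho$-comonotone inequality against every element of $\mathrm{gra}\,\mathcal{A}$, and then invoke Definition \ref{comonotone}(iv) to conclude that $(x,u)$ cannot lie outside the graph. Concretely, fix an arbitrary $(y,v)\in\mathrm{gra}\,\mathcal{A}$. Since each $(x_n,u_n)\in\mathrm{gra}\,\mathcal{A}$, the $\rho$-comonotonicity in Definition \ref{comonotone}(iii) gives
\[
\langle x_n-y,\,u_n-v\rangle \ge \rho\|u_n-v\|^2
\]
for every $n$, and the heart of the argument is to pass to the limit in this inequality.

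For the left-hand side I would split $\langle x_n-y,\,u_n-v\rangle = \langle x_n-y,\,u-v\rangle + \langle x_n-y,\,u_n-u\rangle$; the first term converges to $\langle x-y,\,u-v\rangle$ because $x_n\rightharpoonup x$, while the second is bounded by $\|x_n-y\|\,\|u_n-u\|$, which tends to $0$ since weakly convergent sequences are bounded and $u_n\to u$ strongly. For the right-hand side, $\|u_n-v\|^2\to\|u-v\|^2$ by strong convergence of $u_n$. Hence in the limit
\[
\langle x-y,\,u-v\rangle \ge \rho\|u-v\|^2,
\]
and, $(y,v)$ being arbitrary, this holds for every element of $\mathrm{gra}\,\mathcal{A}$.

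Finally I would appeal to maximality. If $(x,u)\notin\mathrm{gra}\,\mathcal{A}$, then adjoining it would produce an operator $\mathcal{B}$ with $\mathrm{gra}\,\mathcal{B}=\mathrm{gra}\,\mathcal{A}\cup\{(x,u)\}$, which is still $\rho$-comonotone (the pairings of $(x,u)$ with all $(y,v)\in\mathrm{gra}\,\mathcal{A}$ were just verified, and the self-pairing of $(x,u)$ is the trivial identity $0\ge 0$) and properly contains $\mathrm{gra}\,\mathcal{A}$, contradicting Definition \ref{comonotone}(iv). Therefore $(x,u)\in\mathrm{gra}\,\mathcal{A}$.

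The only delicate point, and the step I expect to be the main obstacle, is the mixed-convergence limit on the left-hand side: one must exploit that the pairing of a weak limit against a strong limit converges, which relies precisely on the boundedness of $\{x_n\}$ supplied by weak convergence. An alternative and even shorter route is to reduce to the classical weak-strong closedness of maximally monotone operators via Proposition \ref{z}(i): since $(x_n,u_n)\in\mathrm{gra}\,\mathcal{A}$ is equivalent to $(u_n,\,x_n-\rho u_n)\in\mathrm{gra}(\mathcal{A}^{-1}-\rho Id)$, and here $u_n\to u$ strongly while $x_n-\rho u_n\rightharpoonup x-\rho u$ weakly, the closedness of the maximally monotone operator $\mathcal{A}^{-1}-\rho Id$ yields $x-\rho u\in(\mathcal{A}^{-1}-\rho Id)(u)$, that is, $(x,u)\in\mathrm{gra}\,\mathcal{A}$.
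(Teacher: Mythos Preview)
Your proposal is correct. Both routes you describe are valid: the direct limit argument passes the $\rho$-comonotone inequality to $(x,u)$ against every $(y,v)\in\mathrm{gra}\,\mathcal{A}$ and then invokes maximality, and the alternative route via Proposition~\ref{z}(i) reduces to the standard weak--strong sequential closedness of the graph of a maximally monotone operator (applied to $\mathcal{A}^{-1}-\rho\,Id$, with $u_n\to u$ strongly in the domain variable and $x_n-\rho u_n\rightharpoonup x-\rho u$ weakly in the image variable).

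Note, however, that the paper does not actually supply its own proof of this proposition: it is quoted verbatim from \cite[Proposition~2.4]{Tan} and stated without argument. So there is no in-paper proof to compare against. Your direct argument is the natural self-contained one, and your alternative is most likely the route taken in the cited source, since Proposition~\ref{z}(i) is set up precisely to transfer such statements from the maximally monotone setting to the maximally $\rho$-comonotone setting.
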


\begin{lemma} \cite [Lemma 5.1] {NL} \label{exists}
Suppose that $F:[0,+\infty)\rightarrow \mathbb{R}$ is locally absolutely continuous and bounded below and that there exists $G\in L^{1}([0,+\infty)$ such that for almost every $t\in [0,+\infty)$,
$$\frac{d}{dt}F(t)\leq G(t).$$
Then there exists $\lim_{t\rightarrow +\infty}F(t)\in \mathbb{R}$.	
\end{lemma}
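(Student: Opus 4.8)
The plan is to split off from $F$ a monotone piece that can be controlled directly by the $L^1$ hypothesis. First I would introduce the auxiliary function
$$H(t) := F(t) - \int_0^t G(s)\,ds, \qquad t \in [0,+\infty).$$
Since $F$ is locally absolutely continuous and $G \in L^1([0,+\infty)) \subset L^1_{\mathrm{loc}}([0,+\infty))$, the primitive $t \mapsto \int_0^t G(s)\,ds$ is absolutely continuous, so $H$ is locally absolutely continuous as well. By the fundamental theorem of calculus for absolutely continuous functions, $H$ is differentiable almost everywhere with $\frac{d}{dt}H(t) = \frac{d}{dt}F(t) - G(t) \le 0$ for almost every $t$, where the inequality is exactly the standing hypothesis. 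Hence for any $t_1 \le t_2$ one gets $H(t_2) - H(t_1) = \int_{t_1}^{t_2} \frac{d}{dt}H(s)\,ds \le 0$, i.e. $H$ is non-increasing on $[0,+\infty)$.

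Next I would use the integrability of $G$ to bound $H$ from below. Because $G \in L^1([0,+\infty))$, the limit $\int_0^{+\infty} G(s)\,ds$ exists and is finite, and in particular the primitive $t \mapsto \int_0^t G(s)\,ds$ is bounded on $[0,+\infty)$; say $\left| \int_0^t G(s)\,ds \right| \le M$ for all $t$. Combining this with the assumption that $F$ is bounded below, say $F(t) \ge m$ for all $t$, yields $H(t) = F(t) - \int_0^t G(s)\,ds \ge m - M$ for every $t$. Thus $H$ is non-increasing and bounded below.

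A non-increasing real function that is bounded below converges to a finite limit, so $\ell := \lim_{t\to+\infty} H(t)$ exists in $\mathbb{R}$. Finally, writing $F(t) = H(t) + \int_0^t G(s)\,ds$ and letting $t \to +\infty$, both summands converge to finite limits, whence
$$\lim_{t\to+\infty} F(t) = \ell + \int_0^{+\infty} G(s)\,ds \in \mathbb{R},$$
which is the assertion. The only step that requires genuine care is the passage from $\frac{d}{dt}H \le 0$ almost everywhere to monotonicity of $H$: this is legitimate precisely because $H$ is absolutely continuous, so it equals the integral of its derivative, and this is exactly where the \emph{local absolute continuity} of $F$ is used rather than mere differentiability. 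Everything else is a routine combination of the $L^1$ bound on the primitive and the monotone convergence of a bounded-below decreasing function.
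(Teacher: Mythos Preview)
Your proof is correct. The paper does not supply its own proof of this lemma; it is quoted verbatim as a known result from \cite[Lemma 5.1]{NL} (Abbas, Attouch, Svaiter) and used as a black box in the convergence analysis. So there is no paper argument to compare against.

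For what it is worth, the argument you give is the standard one and matches what one finds in the cited source: subtract the primitive of $G$ to obtain a locally absolutely continuous function with nonpositive a.e.\ derivative, use absolute continuity on compact intervals to turn this into genuine monotonicity, and then combine the resulting finite limit with the convergence of $\int_0^t G(s)\,ds$ (finite because $G\in L^1$). Your remark that mere a.e.\ differentiability would not suffice, and that local absolute continuity is precisely what licenses the passage from $H'\le 0$ a.e.\ to $H$ non-increasing, is exactly the point.
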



\begin{lemma} \cite [Fact 2.5] {Ou} \label{des}
Let $\{a_n\}_{n\in \mathbb{N}}$ and $\{b_n\}_{n\in \mathbb{N}}$ be sequences in $\mathbb{R}_{+}$ such that $\sum_{n\in \mathbb{N}}b_n< +\infty$ and 
$$(\forall  n\in \mathbb{N}) ~~~~a_{n+1}\leq a_n+b_n.$$	
Then $\lim_{n\rightarrow +\infty}a_n\in \mathbb{R}_{+}$.
\end{lemma}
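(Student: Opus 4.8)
The plan is to reduce the statement to the monotone convergence theorem for real sequences by building an auxiliary sequence that is non-increasing and bounded below, thereby absorbing the error terms $b_n$. Since $\sum_{n\in\mathbb{N}} b_n < +\infty$, the tail sums $s_n := \sum_{k=n}^{+\infty} b_k$ are well-defined finite nonnegative numbers satisfying $s_n \to 0$ as $n\to +\infty$ and $s_n = b_n + s_{n+1}$. I would set $c_n := a_n + s_n$ for each $n\in\mathbb{N}$.

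First I would verify that $\{c_n\}$ is non-increasing. Combining the identity $s_n = b_n + s_{n+1}$ with the hypothesis $a_{n+1} \leq a_n + b_n$ gives
$$c_{n+1} = a_{n+1} + s_{n+1} \leq a_n + b_n + s_{n+1} = a_n + s_n = c_n.$$
Next, since $a_n \geq 0$ and $s_n \geq 0$, the sequence $\{c_n\}$ is bounded below by $0$. A non-increasing real sequence bounded below converges, so $c_n \to c$ for some $c \geq 0$.

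Finally, I would recover the conclusion by writing $a_n = c_n - s_n$ and passing to the limit: since $c_n \to c$ and $s_n \to 0$, it follows that $a_n \to c$. Because each $a_n \geq 0$, the limit $c$ is automatically nonnegative, so $\lim_{n\to+\infty} a_n = c \in \mathbb{R}_{+}$, as required.

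There is no genuine obstacle in this argument; the only point requiring a little foresight is the choice of the auxiliary sequence. Using the tail $s_n$ rather than the partial sums $\sum_{k<n} b_k$ is slightly cleaner, since it makes the monotonicity of $c_n$ immediate and simultaneously guarantees that the limit lies in $\mathbb{R}_{+}$ without a separate sign argument.
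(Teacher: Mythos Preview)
Your proof is correct and is the standard argument for this classical fact. The paper itself does not supply a proof of this lemma; it merely cites it from \cite[Fact 2.5]{Ou}, so there is no in-paper argument to compare against.
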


\begin{lemma}  \cite{Opial} \label{op}
Let $S$ be a nonempty subset of $\mathcal{H}$ and let $\{x_k\}$ be a sequence of elements of $\mathcal{H}$. Assume that
\begin{enumerate}
\item[(i)] for every $z\in S$, $\lim\limits_{k\rightarrow +\infty}\|x_k-z\|$ exists;
\item[(ii)] every weak sequential limit point of $\{x_k\}$, as $k\rightarrow +\infty$, belongs to $S$.
\end{enumerate}
Then $x_k$ converges weakly as $k\rightarrow +\infty$ to a point in $S$.	
\end{lemma}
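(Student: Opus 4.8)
The plan is to prove this classical Opial lemma in three stages: establish boundedness of $\{x_k\}$, extract weak sequential cluster points, and then show that such a cluster point is unique.

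First I would fix an arbitrary $z_0\in S$. By hypothesis (i) the limit $\lim_{k\to+\infty}\|x_k-z_0\|$ exists and is finite, so the real sequence $\{\|x_k-z_0\|\}$ is bounded; hence $\{x_k\}$ is bounded in $\mathcal{H}$. Since a Hilbert space is reflexive, every bounded sequence admits a weakly convergent subsequence, so $\{x_k\}$ possesses at least one weak sequential cluster point, and by hypothesis (ii) every such point lies in $S$.

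The heart of the argument is to show that $\{x_k\}$ has exactly one weak cluster point. Suppose $x'$ and $x''$ are two weak cluster points, realized along subsequences $x_{k_i}\rightharpoonup x'$ and $x_{m_j}\rightharpoonup x''$; by (ii) both $x',x''\in S$. Expanding the difference of squared norms gives
$$\|x_k-x'\|^2-\|x_k-x''\|^2=2\langle x''-x',x_k\rangle-\big(\|x''\|^2-\|x'\|^2\big).$$
Because $x',x''\in S$, hypothesis (i) guarantees that both $\lim_k\|x_k-x'\|^2$ and $\lim_k\|x_k-x''\|^2$ exist, so the left-hand side converges and therefore $\lim_k\langle x''-x',x_k\rangle$ exists. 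Passing to the limit along the two subsequences and using weak convergence, this single limit must equal both $\langle x''-x',x'\rangle$ and $\langle x''-x',x''\rangle$; equating them yields $\langle x''-x',x''-x'\rangle=0$, that is $\|x''-x'\|^2=0$, whence $x'=x''$.

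Finally, a bounded sequence in a Hilbert space with a unique weak cluster point $x^*$ converges weakly to $x^*$: otherwise there would be a weak neighborhood of $x^*$ and a subsequence lying in its (weakly closed) complement, which, being bounded, would have a further weakly convergent sub-subsequence whose limit is a cluster point outside that neighborhood, contradicting uniqueness. Thus $\{x_k\}$ converges weakly to its unique cluster point, which belongs to $S$ by (ii). I expect the uniqueness step to be the main obstacle, since it is the only place where hypothesis (i) enters essentially: the algebraic identity above is precisely what converts convergence of the norms $\|x_k-z\|$ into convergence of the linear functional $\langle x''-x',\,\cdot\,\rangle$ along the sequence, forcing the two weak limits to coincide.
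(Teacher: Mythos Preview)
Your proof is correct and is precisely the standard argument for Opial's lemma: boundedness from hypothesis (i), existence of weak cluster points by reflexivity, uniqueness via the polarization-type identity together with (i), and the routine fact that a bounded sequence with a single weak cluster point converges weakly to it. The paper itself does not prove this lemma; it merely states it with a citation to Opial, so there is no in-paper proof to compare against, but your argument is exactly the classical one found in the cited reference.
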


\section{ Convergence properties of inertial dynamical system}\label{sec3}
In this section  we shall  investigate the convergence properties of \eqref{DS}.  Before this,  we will first give  a short discussion concerning the existence and uniqueness of  a global solution of  \eqref{DS},

\subsection{Existence and uniqueness}

\begin{definition}
We say that $x:[t_0,+\infty)\longrightarrow \mathcal{H}$ is a strong global solution of $(\ref{DS})$ if it satisfies the following properties:
\begin{enumerate}
\item[(i)] $x$, $\dot{x}:[t_0,+\infty)\longrightarrow \mathcal{H}$ are locally absolutely continuous.
\item[(ii)] $\ddot{x}(t)+\frac{\alpha}{t}\dot{x}(t)+\frac{\beta}{t}\mathcal{A}_{\eta}(x(t)+\frac{t}{\gamma}\dot{x}(t))=0$ for almost every $t\geq t_0$.
\end{enumerate}	
\end{definition}


\begin{theorem}
Suppose that  $\mathcal{A}:\mathcal{H}\rightarrow 2^{\mathcal{H}}$ is a maximally $\rho$-comonotone operator  with $\rho\in \mathbb{R}$  and  $\text{ zer }\mathcal{A}\neq \emptyset$ and $\eta>\max\{-2\rho, 0\}$. Then, for any $(u_0,v_0)\in \mathcal{H}\times  \mathcal{H}$, there exists a unique strong global solution $x: [t_0, +\infty) \rightarrow \mathcal{H}$ of the system $(\ref{DS})$ which satisfies the Cauchy data $x(t_0)=u_0$ and $\dot{x}(t_0)=v_0$, where $t_0>0$.
\end{theorem}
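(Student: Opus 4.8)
The plan is to reformulate the second-order system \eqref{DS} as a first-order system in the product Hilbert space $\mathcal{H}\times\mathcal{H}$ and then invoke the Cauchy--Lipschitz (Picard--Lindel\"of) theorem, using crucially that the Yosida regularization $\mathcal{A}_{\eta}$ is single-valued and Lipschitz continuous. Concretely, setting $X(t)=(x(t),\dot{x}(t))$, the system \eqref{DS} is equivalent to $\dot{X}(t)=F(t,X(t))$ with
$$F(t,u,v)=\left(v,\,-\frac{\alpha}{t}v-\frac{\beta}{t}\mathcal{A}_{\eta}\Big(u+\frac{t}{\gamma}v\Big)\right),$$
and the Cauchy data become $X(t_0)=(u_0,v_0)$. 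Because $\eta>\max\{-2\rho,0\}$ forces $\rho+\eta>0$, part (iii) of Proposition \ref{z} guarantees that $\mathcal{A}_{\eta}$ is single-valued and $\frac{1}{\rho+\eta}$-Lipschitz, so $F$ is a genuine single-valued map and no set-valued or maximal-monotone machinery is needed here.

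First I would verify the hypotheses of Cauchy--Lipschitz on every compact time interval $[t_0,T]$. For fixed $t$, the map $(u,v)\mapsto F(t,u,v)$ is Lipschitz: subtracting two arguments and using the $\frac{1}{\rho+\eta}$-Lipschitz continuity of $\mathcal{A}_{\eta}$ together with the triangle estimate $\|(u_1-u_2)+\frac{t}{\gamma}(v_1-v_2)\|\leq\|u_1-u_2\|+\frac{t}{\gamma}\|v_1-v_2\|$, one obtains a Lipschitz constant of the form $L(T)$ depending on $\alpha,\beta,\gamma,\eta,\rho,t_0,T$ but uniform in $t\in[t_0,T]$ (here $1/t\le 1/t_0$ and $t\le T$). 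Moreover $t\mapsto F(t,u,v)$ is continuous on $[t_0,+\infty)$ for each fixed $(u,v)$, since $t\mapsto 1/t$, $t\mapsto t$ and $\mathcal{A}_{\eta}$ are continuous. These two facts yield a unique maximal solution on some interval $[t_0,T_{\max})$ with $x,\dot{x}$ locally absolutely continuous, that is, a strong solution in the sense of the preceding definition.

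The remaining, and main, step is to upgrade this local solution to a global one, i.e. to show $T_{\max}=+\infty$. I would argue by contradiction: assuming $T_{\max}<+\infty$, the blow-up alternative of the Cauchy--Lipschitz theorem says $\|X(t)\|\to+\infty$ as $t\uparrow T_{\max}$, so it suffices to establish an a priori bound on $[t_0,T_{\max})$. To this end, fix $x^*\in\text{zer }\mathcal{A}=\text{zer }\mathcal{A}_{\eta}$ (nonempty by assumption, the two sets coinciding by Remark \ref{av}), so that $\mathcal{A}_{\eta}(x^*)=0$, and estimate
$$\Big\|\mathcal{A}_{\eta}\Big(u+\tfrac{t}{\gamma}v\Big)\Big\|=\Big\|\mathcal{A}_{\eta}\Big(u+\tfrac{t}{\gamma}v\Big)-\mathcal{A}_{\eta}(x^*)\Big\|\leq\frac{1}{\rho+\eta}\Big\|u+\tfrac{t}{\gamma}v-x^*\Big\|.$$
This yields a linear growth bound $\|F(t,X(t))\|\le a(t)+b(t)\|X(t)\|$ with $a,b$ bounded on $[t_0,T_{\max}]$, whence from the integral form $X(t)=X(t_0)+\int_{t_0}^{t}F(s,X(s))\,ds$ and Gr\"onwall's inequality the norm $\|X(t)\|$ stays bounded on $[t_0,T_{\max})$. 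This contradicts the blow-up alternative and forces $T_{\max}=+\infty$.

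The only delicate point is keeping the Lipschitz and growth constants uniform as $t\uparrow T_{\max}$, which is automatic here because $1/t\le 1/t_0$ and $t\le T_{\max}<+\infty$ on the whole interval; thus the Lipschitz continuity of $\mathcal{A}_{\eta}$ supplied by Proposition \ref{z}(iii) is exactly what makes both the local well-posedness and the global a priori bound go through, and the implicit argument $u+\frac{t}{\gamma}v$ causes no additional difficulty since $\mathcal{A}_{\eta}$ is evaluated at an explicitly available point.
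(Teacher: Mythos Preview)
Your proposal is correct and follows essentially the same approach as the paper: both reduce \eqref{DS} to the first-order system $\dot{Z}(t)=F(t,Z(t))$ on $\mathcal{H}\times\mathcal{H}$ with the same $F$, and both rely on the $\frac{1}{\rho+\eta}$-Lipschitz continuity of $\mathcal{A}_{\eta}$ from Proposition~\ref{z}(iii) to invoke the Cauchy--Lipschitz--Picard theorem. The only difference is that you spell out the global extension explicitly via the blow-up alternative and a Gr\"onwall bound (using $x^*\in\text{zer }\mathcal{A}$ to get linear growth), whereas the paper simply appeals to Cauchy--Lipschitz--Picard together with ``standard arguments'' for global existence.
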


\begin{proof}
 Rewrite $(\ref{DS})$ as
\begin{equation*}
\left\{
\begin{array}{lcl}
\dot{x}(t)=y(t),\\
\dot{y}(t)=-\frac{\alpha}{t}y(t)-\frac{\beta}{t}\mathcal{A}_{\eta}\left(x(t)+\frac{t}{\gamma}y(t)\right).
\end{array}
\right.
\end{equation*}
Then, the system $(\ref{DS})$ with the Cauchy data $x(t_0)=u_0$ and $\dot{x}(t_0)=v_0$ is equivalent to  the first order dynamical system
\begin{equation}\label{Z-equ}
\left\{
\begin{array}{lcl}
\dot{Z}(t)=F(t,Z(t)),\\
Z(t_0)=(u_0,v_0),
\end{array}
\right.
\end{equation}	
where  $Z(t):=(x(t),y(t))$ and $$F(t,Z)=(y,-\frac{\alpha}{t}y-\frac{\beta}{t}\mathcal{A}_{\eta}(x+\frac{t}{\gamma}y)),\quad\forall t\ge t_0, \forall Z:=(x,y)\in \mathcal{H}\times\mathcal{H}.$$
By $(iii)$ of Proposition \ref{z}, $\mathcal{A}_{\eta}$ is $\frac{1}{\rho+\eta}$-Lipschitz continuity. According to the Cauchy-Lipschitz-Picard Theorem (see \cite [Proposition 6.2.1] {Haraux}) and using standard arguments,  we know that \eqref{Z-equ} admits a unique strong global solution. The conclusion follows.
\end{proof}

\subsection{Convergence analysis}
\begin{theorem}\label{Th-s}
Assume that  $\mathcal{A}:\mathcal{H}\rightarrow 2^{\mathcal{H}}$ is a maximally $\rho$-comonotone operator with $\text{ zer }\mathcal{A}\neq \emptyset$ and the parameters in the system \eqref{DS}  satisfy  $\eta>\max\{-2\rho, 0\}$, $\alpha>\gamma+2$ and $\beta,\gamma>0$. Let $x:[t_0,+\infty)\rightarrow \mathcal{H}$ be a global strong solution of \eqref{DS}. Then, the following statements are true:
\begin{enumerate}
\item [(i)]  $\int_{t_0}^{+\infty}t\|\dot{x}(t)\|^2dt<+\infty$ and $\int_{t_0}^{+\infty} t\|\mathcal{A}_{\eta}(x(t)+\frac{t}{\gamma}\dot{x}(t))\|^2 dt <+\infty$.
\item [(ii)] $\|\dot{x}(t)\|=o(\frac{1}{t})$ and $\|\mathcal{A}_{\eta}(x(t)+\frac{t}{\gamma}\dot{x}(t))\|=o(\frac{1}{t})$ as $t\rightarrow +\infty$.
\item [(iii)] $x(t)$ converges weakly, as $t\rightarrow +\infty$, to an element of $\mbox{zer} \mathcal{A}$.
\end{enumerate}
\end{theorem}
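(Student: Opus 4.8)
The plan is to run a Lyapunov analysis anchored at an arbitrary $x^*\in\mbox{zer}\,\mathcal A=\mbox{zer}\,\mathcal A_\eta$, for which $\mathcal A_\eta(x^*)=0$ and, by Proposition \ref{z}(ii), $\mathcal A_\eta$ is $(\rho+\eta)$-cocoercive. Write $z(t)=x(t)+\frac{t}{\gamma}\dot x(t)$ and set $v(t)=(\alpha-1)(x(t)-x^*)+t\dot x(t)$. A direct computation using \eqref{DS} gives the clean identity $\dot v(t)=-\beta\mathcal A_\eta(z(t))$, and since $t\dot x=\gamma(z-x)$ one also has $v=(\alpha-1-\gamma)(x-x^*)+\gamma(z-x^*)$. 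I would take the nonnegative energy
\[
E(t)=\tfrac12\|v(t)\|^2+\tfrac{\alpha-1-\gamma}{2\gamma}\,t^2\|\dot x(t)\|^2,
\]
which is well defined because $\alpha>\gamma+2$. Differentiating, expanding $\langle x-x^*,\mathcal A_\eta(z)\rangle=\langle z-x^*,\mathcal A_\eta(z)\rangle-\tfrac{t}{\gamma}\langle\dot x,\mathcal A_\eta(z)\rangle$, and substituting $\frac{d}{dt}(t^2\|\dot x\|^2)=2(1-\alpha)t\|\dot x\|^2-2\beta t\langle\dot x,\mathcal A_\eta(z)\rangle$, the two cross terms in $\langle\dot x,\mathcal A_\eta(z)\rangle$ cancel and cocoercivity yields
\[
\dot E(t)\le-\beta(\alpha-1)(\rho+\eta)\|\mathcal A_\eta(z(t))\|^2-\tfrac{(\alpha-1-\gamma)(\alpha-1)}{\gamma}\,t\|\dot x(t)\|^2\le0.
\]
Hence $E$ is nonincreasing and bounded below; integrating gives $\int_{t_0}^{+\infty}t\|\dot x\|^2\,dt<+\infty$ and $\int_{t_0}^{+\infty}\|\mathcal A_\eta(z)\|^2\,dt<+\infty$, and $E$ admits a finite limit.

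The harder half of (i), and the technical heart of the whole theorem, is upgrading the \emph{unweighted} bound $\int\|\mathcal A_\eta(z)\|^2\,dt<+\infty$ to the $t$-weighted estimate $\int_{t_0}^{+\infty}t\|\mathcal A_\eta(z)\|^2\,dt<+\infty$, since the basic energy above produces no factor $t$ in front of $\|\mathcal A_\eta(z)\|^2$. Here I would exploit the Newton-like (Hessian-damping) mechanism: Proposition \ref{der} applied to $z(t)$ gives $\langle\dot z(t),\frac{d}{dt}\mathcal A_\eta(z(t))\rangle\ge0$, and computing $\dot z=-\tfrac{\alpha-1-\gamma}{\gamma}\dot x-\tfrac{\beta}{\gamma}\mathcal A_\eta(z)$ from \eqref{DS} converts this monotonicity into a usable sign condition on $\langle\dot x,\frac{d}{dt}\mathcal A_\eta(z)\rangle$. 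Feeding this into a secondary Lyapunov functional carrying an explicit $t^2\|\mathcal A_\eta(z)\|^2$ contribution (and using $\dot v=-\beta\mathcal A_\eta(z)$ to trade $\|\mathcal A_\eta(z)\|^2$ for $\|\dot v\|^2$) should produce the missing weighted dissipation. I expect this to be the \emph{main obstacle}: the implicit argument $x+\tfrac{t}{\gamma}\dot x$ is precisely what keeps the Newton correction only implicit, so the term $\langle\dot x,\frac{d}{dt}\mathcal A_\eta(z)\rangle$ sits on the borderline of integrability and must be tamed through Proposition \ref{der} rather than the crude $\frac{1}{\rho+\eta}$-Lipschitz bound of Proposition \ref{z}(iii), which alone only yields non-integrable $t^2\|\dot x\|^2$ terms.

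For the pointwise rates (ii) I would use Lemma \ref{exists}. For $\|\dot x\|=o(1/t)$: from \eqref{DS}, $\frac{d}{dt}(t^2\|\dot x\|^2)=2(1-\alpha)t\|\dot x\|^2-2\beta t\langle\dot x,\mathcal A_\eta(z)\rangle$, and Young's inequality bounds the right-hand side above by $\beta t\|\dot x\|^2+\beta t\|\mathcal A_\eta(z)\|^2$, which lies in $L^1([t_0,+\infty))$ thanks to (i). Since $t^2\|\dot x\|^2$ is bounded below, Lemma \ref{exists} gives that $\lim_{t\to+\infty}t^2\|\dot x(t)\|^2$ exists, and because $\int_{t_0}^{+\infty}t\|\dot x\|^2\,dt=\int_{t_0}^{+\infty}\tfrac1t(t^2\|\dot x\|^2)\,dt<+\infty$ while $\int_{t_0}^{+\infty}\tfrac{dt}{t}=+\infty$, this limit must be $0$. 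The operator estimate $\|\mathcal A_\eta(z)\|=o(1/t)$ is obtained in parallel, but it genuinely requires the full strength of the secondary analysis of the preceding paragraph: the convergence of the functional carrying $t^2\|\mathcal A_\eta(z)\|^2$, combined with $\int t\|\mathcal A_\eta(z)\|^2\,dt<+\infty$, forces the limit to vanish.

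Finally, for the weak convergence (iii) I would invoke Opial's Lemma \ref{op} with $S=\mbox{zer}\,\mathcal A$. To verify its first hypothesis, set $h(t)=\tfrac12\|x(t)-x^*\|^2$; then $\ddot h+\tfrac{\alpha}{t}\dot h=\|\dot x\|^2-\tfrac{\beta}{t}\langle x-x^*,\mathcal A_\eta(z)\rangle$, so $\frac{d}{dt}(t^\alpha\dot h)=t^\alpha\|\dot x\|^2-\beta t^{\alpha-1}\langle x-x^*,\mathcal A_\eta(z)\rangle$. Expanding the inner product as above, applying Young's inequality, and interchanging the order of integration via $\int_s^{+\infty}t^{-\alpha}\,dt=\tfrac{s^{1-\alpha}}{\alpha-1}$ shows $\int_{t_0}^{+\infty}[\dot h(t)]_+\,dt\le\tfrac{1}{\alpha-1}\int_{t_0}^{+\infty}s\big(c_1\|\dot x\|^2+c_2\|\mathcal A_\eta(z)\|^2\big)\,ds<+\infty$, finite precisely because of the two weighted integrals from (i). Thus $\frac{d}{dt}h\le[\dot h]_+\in L^1$ and Lemma \ref{exists} yields the existence of $\lim_{t\to+\infty}\|x(t)-x^*\|$. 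For the second hypothesis, take $t_n\to+\infty$ with $x(t_n)\rightharpoonup\bar x$; since $\|\dot x\|=o(1/t)$ we have $\tfrac{t_n}{\gamma}\dot x(t_n)\to0$ strongly, so $z(t_n)\rightharpoonup\bar x$, while $\mathcal A_\eta(z(t_n))\to0$ strongly by (ii). By Proposition \ref{z}(iv), $(J_\eta^{\mathcal A}z(t_n),\mathcal A_\eta(z(t_n)))\in\mbox{gra}\,\mathcal A$, and $J_\eta^{\mathcal A}z(t_n)=z(t_n)-\eta\mathcal A_\eta(z(t_n))\rightharpoonup\bar x$; the demiclosedness statement of Proposition \ref{maximal} then forces $(\bar x,0)\in\mbox{gra}\,\mathcal A$, i.e.\ $\bar x\in\mbox{zer}\,\mathcal A$. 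Opial's lemma concludes.
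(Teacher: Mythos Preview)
Your overall plan matches the paper's very closely. In fact your primary energy $E(t)$ is nothing other than $\tfrac{\alpha-1}{\gamma}\varepsilon_\gamma(t)$, where the paper works with the one-parameter family $\varepsilon_b(t)=\tfrac12\|b(x-x^*)+t\dot x\|^2+\tfrac{b(\alpha-1-b)}{2}\|x-x^*\|^2$ and first specializes to $b=\gamma$; the resulting dissipation inequality is literally the same as yours. The identification of the hard step, the tool (Proposition~\ref{der} applied to $z$), and the formula $\dot z=-\tfrac{\alpha-1-\gamma}{\gamma}\dot x-\tfrac{\beta}{\gamma}\mathcal A_\eta(z)$ are all exactly what the paper uses.

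There is one imprecision in your sketch of the secondary functional. Proposition~\ref{der} does \emph{not} give a sign on $\langle\dot x,\tfrac{d}{dt}\mathcal A_\eta(z)\rangle$ alone; it gives $\langle\dot z,\tfrac{d}{dt}\mathcal A_\eta(z)\rangle\ge0$, which is a sign on the \emph{combination} $(\alpha-1-\gamma)\langle\dot x,\tfrac{d}{dt}\mathcal A_\eta(z)\rangle+\beta\langle\mathcal A_\eta(z),\tfrac{d}{dt}\mathcal A_\eta(z)\rangle$. Consequently the quantity to differentiate is not $t^2\|\mathcal A_\eta(z)\|^2$ (equivalently $t^2\|\dot v\|^2$) but $t^2\|(\alpha-1-\gamma)\dot x+\beta\mathcal A_\eta(z)\|^2=\gamma^2 t^2\|\dot z\|^2$: this is precisely the vector whose pairing with $\tfrac{d}{dt}\mathcal A_\eta(z)$ has the good sign, and the paper's estimate then follows by one application of Young's inequality on the residual cross term $t\langle\dot x,(\alpha-1-\gamma)\dot x+\beta\mathcal A_\eta(z)\rangle$, using only $\int t\|\dot x\|^2<\infty$. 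With your choice $t^2\|\mathcal A_\eta(z)\|^2$ one can still push the argument through via an integration by parts on $t^2\langle\dot x,\tfrac{d}{dt}\mathcal A_\eta(z)\rangle$, but it is noticeably more delicate; the paper's combination is the clean one.

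For (iii) your route genuinely differs from the paper's. The paper exploits that $\lim_{t\to\infty}\varepsilon_b(t)$ exists for every $b\in[0,\alpha-1]$ (obtained from the general $\dot\varepsilon_b$ inequality plus $\int t|\langle\dot x,\mathcal A_\eta(z)\rangle|<\infty$), expands $\varepsilon_b$ in terms of $\|x-x^*\|^2$, $t^2\|\dot x\|^2$ and $t\langle x-x^*,\dot x\rangle$, and reads off the existence of $\lim\|x-x^*\|^2$ from two values of $b$. Your approach via the anchor equation $\ddot h+\tfrac{\alpha}{t}\dot h=\cdots$, the integrating factor $t^\alpha$, and Fubini is also valid and uses exactly the same two weighted integrals from (i); it is a standard alternative that avoids introducing the full $\varepsilon_b$ family. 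Either method is fine.
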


\begin{proof}
Given $x^*\in \mbox{zer}\mathcal{A}$ and  $b \in [0, \alpha-1]$,  define 
$$\varepsilon_b(t):=\frac{1}{2}\|b(x(t)-x^*)+t\dot{x}(t)\|^2+\frac{b(\alpha-1-b)}{2}\|x(t)-x^*\|^2.$$
Differentiating $\varepsilon_b(t)$ gives
\begin{eqnarray}\label{ee}
\frac{d\varepsilon_b(t)}{dt}&=&\langle b(x(t)-x^*)+t\dot{x}(t),b\dot{x}(t)+\dot{x}(t)+t\ddot{x}(t)\rangle+b(\alpha-1-b)\langle x(t)-x^*,\dot{x}(t)\rangle\nonumber\\
&=&\langle b(x(t)-x^*)+t\dot{x}(t),(b+1-\alpha)\dot{x}(t)-\beta\mathcal{A}_{\eta}(x(t)+\frac{t}{\gamma}\dot{x}(t))\rangle \nonumber\\
&&+b(\alpha-1-b)\langle x(t)-x^*,\dot{x}(t)\rangle\nonumber\\
&=&-b\beta\langle x(t)-x^*, \mathcal{A}_{\eta}(x(t)+\frac{t}{\gamma}\dot{x}(t))\rangle-\beta t\langle \dot{x}(t),\mathcal{A}_{\eta}(x(t)+\frac{t}{\gamma}\dot{x}(t)) \rangle\nonumber\\
&&+(b+1-\alpha)t\|\dot{x}(t)\|^2\nonumber\\
&=&-b\beta  \langle x(t)+\frac{t}{\gamma}\dot{x}(t)-x^*, \mathcal{A}_{\eta}(x(t)+\frac{t}{\gamma}\dot{x}(t))\rangle\nonumber\\
&&+(\frac{b}{\gamma}-1)\beta t\langle \dot{x}(t), \mathcal{A}_{\eta}(x(t)+\frac{t}{\gamma}\dot{x}(t))\rangle+(b+1-\alpha)t\|\dot{x}(t)\|^2\nonumber\\
&\leq&-b\beta(\rho+\eta)\|\mathcal{A}_{\eta}(x(t)+\frac{t}{\gamma}\dot{x}(t))\|^2+(b+1-\alpha)t\|\dot{x}(t)\|^2\nonumber\\
&&+(\frac{b}{\gamma}-1)\beta t\langle \dot{x}(t), \mathcal{A}_{\eta}(x(t)+\frac{t}{\gamma}\dot{x}(t))\rangle,
\end{eqnarray}
where the second equation uses \eqref{DS} and the inequality is from the $(\rho+\eta)$-cocoercivity of $\mathcal{A}_{\eta}$ (by $(ii)$ of Proposition \ref{z}). In particular, taking $b=\gamma$ in $(\ref{ee})$, we have
\begin{equation}\label{tf-27-1}
\frac{d\varepsilon_\gamma(t)}{dt}+\beta\gamma(\rho+\eta)\|\mathcal{A}_{\eta}(x(t)+\frac{t}{\gamma}\dot{x}(t))\|^2+(\alpha-1-\gamma)t\|\dot{x}(t)\|^2\leq0.
\end{equation}
This together with $\alpha>\gamma+2$ and $\eta>\max\{-2\rho, 0\}$ yields  $\frac{d\varepsilon_\gamma(t)}{dt}\leq 0$, which implies that $\varepsilon_\gamma(t)$ is nonincreasing on $[t_0, +\infty)$.
Integrate \eqref{tf-27-1} from $t_0$ to $+\infty$ to get
$$\beta\gamma(\rho+\eta)\int_{t_0}^{+\infty}\|\mathcal{A}_{\eta}(x(t)+\frac{t}{\gamma}\dot{x}(t))\|^2 dt +(\alpha-1-\gamma)\int_{t_0}^{+\infty}t\|\dot{x}(t)\|^2 dt\leq  \varepsilon_\gamma(t_0),$$
which yields
\begin{equation}\label{x}
\int_{t_0}^{+\infty}t\|\dot{x}(t)\|^2 dt< +\infty.	
\end{equation}
Using \eqref{DS}, we have
\begin{eqnarray*}
\frac{d}{dt}(x(t)+\frac{t}{\gamma}\dot{x}(t))&=&\dot{x}(t)+\frac{1}{\gamma}\dot{x}(t)+\frac{t}{\gamma}\ddot{x}(t)\\
&=&\frac{\gamma+1-\alpha}{\gamma}\dot{x}(t)-\frac{\beta}{\gamma}\mathcal{A}_{\eta}(x(t)+\frac{t}{\gamma}\dot{x}(t)),
\end{eqnarray*}
which  together with  Proposition \ref{der} implies 
\begin{eqnarray*}
&&\langle (\alpha-1-\gamma)\dot{x}(t)+\beta\mathcal{A}_{\eta}(x(t)+\frac{t}{\gamma}\dot{x}(t)),\frac{d}{dt}(\mathcal{A}_{\eta}(x(t)+\frac{t}{\gamma}\dot{x}(t)))\rangle\\
&=&	-\gamma \langle \frac{d}{dt}(x(t)+\frac{t}{\gamma}\dot{x}(t)),\frac{d}{dt}(\mathcal{A}_{\eta}(x(t)+\frac{t}{\gamma}\dot{x}(t)))\rangle\\
&\leq&0.
\end{eqnarray*}
This yields
\begin{eqnarray}\label{es}
&&\frac{d}{dt}(t^2\|(\alpha-1-\gamma)\dot{x}(t)+\beta\mathcal{A}_{\eta}(x(t)+\frac{t}{\gamma}\dot{x}(t))\|^2)\nonumber\\
&=&2t\|(\alpha-1-\gamma)\dot{x}+\beta\mathcal{A}_{\eta}(x(t)+\frac{t}{\gamma}\dot{x}(t))\|^2\nonumber\\
&&+2t^2\langle (\alpha-1-\gamma)\dot{x}(t)+\beta\mathcal{A}_{\eta}(x(t)+\frac{t}{\gamma}\dot{x}(t)),\nonumber\\
&&(\alpha-1-\gamma)(-\frac{\alpha}{t}\dot{x}(t)-\frac{\beta}{t}\mathcal{A}_{\eta}(x(t)+\frac{t}{\gamma}\dot{x}(t)))\rangle\nonumber\\
&&+2\beta t^2\langle (\alpha-1-\gamma)\dot{x}(t)+\beta\mathcal{A}_{\eta}(x(t)+\frac{t}{\gamma}\dot{x}(t)),\frac{d}{dt}(\mathcal{A}_{\eta}(x(t)+\frac{t}{\gamma}\dot{x}(t)))\rangle\nonumber\\
&\le&2(2+\gamma-\alpha)t\|(\alpha-1-\gamma)\dot{x}(t)+\beta\mathcal{A}_{\eta}(x(t)+\frac{t}{\gamma}\dot{x}(t))\|^2\nonumber\\
&&-2(\alpha-1-\gamma)(1+\gamma)t \langle (\alpha-1-\gamma)\dot{x}(t)+\beta\mathcal{A}_{\eta}(x(t)+\frac{t}{\gamma}\dot{x}(t)), \dot{x}(t) \rangle.
\end{eqnarray}
Given $\xi>0$, using the Cauchy-Schwarz inequality, we get
\begin{eqnarray*}
&&2\langle (\alpha-1-\gamma)\dot{x}(t)+\beta\mathcal{A}_{\eta}(x(t)+\frac{t}{\gamma}\dot{x}(t)), \dot{x}(t) \rangle\\
&\leq& \xi \|(\alpha-1-\gamma)\dot{x}(t)+\beta\mathcal{A}_{\eta}(x(t)+\frac{t}{\gamma}\dot{x}(t))\|^2+\frac{1}{\xi}\|\dot{x}(t)\|^2,
\end{eqnarray*}
which, combined with \eqref{es}, leads to  
\begin{eqnarray}\label{contra}
&&\frac{d}{dt}(t^2\|(\alpha-1-\gamma)\dot{x}(t)+\beta\mathcal{A}_{\eta}(x(t)+\frac{t}{\gamma}\dot{x}(t))\|^2)\nonumber\\
&\leq&[4+2\gamma-2\alpha+(\alpha-1-\gamma)(1+\gamma)\xi]t\|(\alpha-1-\gamma)\dot{x}(t)+\beta\mathcal{A}_{\eta}(x(t)+\frac{t}{\gamma}\dot{x}(t))\|^2\nonumber\\
&&+(\alpha-1-\gamma)(1+\gamma)\frac{1}{\xi}t\|\dot{x}(t)\|^2.
\end{eqnarray}
Since $\alpha>\gamma+2$ and $\gamma>0$, we can choose $\xi \in (0,\frac{2\alpha-2\gamma-4}{(\alpha-1-\gamma)(1+\gamma)})$ such that
$$4+2\gamma-2\alpha+(\alpha-1-\gamma)(1+\gamma)\xi<0.$$
This together with \eqref{contra} yields
$$\frac{d}{dt}(t^2\|(\alpha-1-\gamma)\dot{x}(t)+\beta\mathcal{A}_{\eta}(x(t)+\frac{t}{\gamma}\dot{x}(t))\|^2)\leq  (\alpha-1-\gamma)(1+\gamma)\frac{1}{\xi}t\|\dot{x}(t)\|^2.$$
In view of $(\ref{x})$, applying Lemma \ref{exists}  to the above inequality, we have
\begin{equation}\label{tf-a}\lim_{t\rightarrow +\infty}t^2\|(\alpha-1-\gamma)\dot{x}(t)+\beta\mathcal{A}_{\eta}(x(t)+\frac{t}{\gamma}\dot{x}(t))\|^2 \text{ exists }.
\end{equation}
Using again $(\ref{x})$ and integrating $(\ref{contra})$ from $t_0$ to $+\infty$, we we obtain
\begin{eqnarray}\label{int}
\int_{t_0}^{+\infty} t\|(\alpha-1-\gamma)\dot{x}(t)+\beta\mathcal{A}_{\eta}(x(t)+\frac{t}{\gamma}\dot{x}(t))\|^2<+\infty.
\end{eqnarray}
Combining \eqref{tf-a} and \eqref{int}, we have
\begin{equation}\label{e0}
\lim_{t\rightarrow +\infty}t^2\|(\alpha-1-\gamma)\dot{x}(t)+\beta\mathcal{A}_{\eta}(x(t)+\frac{t}{\gamma}\dot{x}(t))\|^2=0.
\end{equation}
It follows from $(\ref{x})$ and $(\ref{int})$ that
\begin{eqnarray*}
&&\int_{t_0}^{+\infty} \frac{1}{2}t\|\beta\mathcal{A}_{\eta}(x(t)+\frac{t}{\gamma}\dot{x}(t))\|^2 dt\\
&=&\int_{t_0}^{+\infty} \frac{1}{2}t\|\beta\mathcal{A}_{\eta}(x(t)+\frac{t}{\gamma}\dot{x}(t))+(\alpha-1-\gamma)\dot{x}(t)-(\alpha-1-\gamma)\dot{x}(t)\|^2 dt\\
&\leq& \int_{t_0}^{+\infty}t\|\beta\mathcal{A}_{\eta}(x(t)+\frac{t}{\gamma}\dot{x}(t))+(\alpha-1-\gamma)\dot{x}(t)\|^2 dt+ \int_{t_0}^{+\infty}t\|(\alpha-1-\gamma)\dot{x}(t)\|^2 dt\\
&<& +\infty.
\end{eqnarray*}
This completes the proof of $(i)$. 

Let us go back to $(\ref{ee})$. Neglecting two nonpositive terms, we have
\begin{eqnarray}\label{lem}
\frac{d\varepsilon_b(t)}{dt}&\leq&\beta t(\frac{b}{\gamma}-1)\langle \dot{x}(t),\mathcal{A}_{\eta}(x(t)+\frac{t}{\gamma}\dot{x}(t))\rangle\nonumber\\
&\leq&\beta t|\frac{b}{\gamma}-1|\cdot|\langle \dot{x}(t),\mathcal{A}_{\eta}(x(t)+\frac{t}{\gamma}\dot{x}(t)) \rangle|.
\end{eqnarray}
Using the Cauchy-Schwarz inequality and the statement $(i)$, we have
\begin{eqnarray*}
&&\int_{t_0}^{+\infty} t|\langle \dot{x}(t),\mathcal{A}_{\eta}(x(t)+\frac{t}{\gamma}\dot{x}(t)) \rangle| dt \\
&\leq& \int_{t_0}^{+\infty} \frac{1}{2}t\|\dot{x}(t)\|^2 dt+ \int_{t_0}^{+\infty} \frac{1}{2}t\|\mathcal{A}_{\eta}(x(t)+\frac{t}{\gamma}\dot{x}(t))\|^2 dt \\
&<& +\infty.
\end{eqnarray*}
Now, applying Lemma \ref{exists} to $(\ref{lem})$, we obtain $\lim_{t\rightarrow +\infty}\varepsilon_b(t)$ exists for any $0\leq b\leq \alpha-1$. In particular, taking $b=0$, we have
$\lim_{t\rightarrow +\infty}t^2\|\dot{x}(t)\|^2$ exists. This along with $(\ref{x})$ leads to
\begin{equation}\label{x0}
\lim_{t\rightarrow +\infty}t^2\|\dot{x}(t)\|^2=0.	
\end{equation}
Combining $(\ref{e0})$ and $(\ref{x0})$, we have
\begin{eqnarray*}
&&\lim_{t\rightarrow +\infty}t^2\|\beta\mathcal{A}_{\eta}(x(t)+\frac{t}{\gamma}\dot{x}(t))\|^2\\
&=&\lim_{t\rightarrow +\infty}t^2\|(\alpha-1-\gamma)\dot{x}(t)+\beta \mathcal{A}_{\eta}(x(t)+\frac{t}{\gamma}\dot{x}(t))-(\alpha-1-\gamma)\dot{x}(t)\|^2\\
&\leq &\lim_{t\rightarrow +\infty}2t^2\|(\alpha-1-\gamma)\dot{x}(t)+\beta \mathcal{A}_{\eta}(x(t)+\frac{t}{\gamma}\dot{x}(t))\|^2+\lim_{t\rightarrow +\infty}2t^2\|(\alpha-1-\gamma)\dot{x}(t)\|^2\\
&=& 0.
\end{eqnarray*}
This proves $(ii)$.

Next, we will show $(iii)$ by using the Opial's lemma (Lemma \ref{op}). By the definition of $\varepsilon_b(t)$,
\begin{eqnarray*}
\varepsilon_b(t)&=&\frac{1}{2}b^2\|x(t)-x^*\|^2+\frac{1}{2}t^2\|\dot{x}(t)\|^2+bt\langle x(t)-x^*,\dot{x}(t)\rangle\\
&&+\frac{(\alpha-1-b)b}{2}\|x(t)-x^*\|^2\\
&=&\frac{b(\alpha-1)}{2}\|x(t)-x^*\|^2+\frac{1}{2}t^2\|\dot{x}(t)\|^2+bt\langle  x(t)-x^*, \dot{x}(t)\rangle.	
\end{eqnarray*}
In order to prove the existence of $\lim_{t\rightarrow+\infty}\|x(t)-x^*\|^2$, we only need to prove that $\lim_{t\rightarrow+\infty}t\langle x(t)-x^*, \dot{x}(t)\rangle=0$, because of the existence of $\lim_{t\rightarrow+\infty}\varepsilon_b(t)$ and $(\ref{x0})$. Since $\varepsilon_b(t)$ is bounded on $[t_0,+\infty)$, by definition we know that $x(t)$ is bounded on  $[t_0,+\infty)$. 
Hence, $\lim_{t\rightarrow+\infty}\|x(t)-x^*\|^2$ exists. 

To complete the proof via Lemma \ref{op}, we need to prove that every weak sequential cluster point of $x(t)$ belongs to zer$\mathcal{A}$. Let $\bar{x}$ be a weak sequential cluster point of $x(t)$. Then, there exists  $ t_n\rightarrow +\infty$ such that $x(t_n)\rightharpoonup \bar{x}$ as $n\rightarrow +\infty$. By $(ii)$,  $\mathcal{A}_{\eta}(x(t_n)+\frac{t_n}{\gamma}\dot{x}(t_n))\rightarrow 0$. Passing to the limit in 
$$\mathcal{A}_{\eta}(x(t_n)+\frac{t_n}{\gamma}\dot{x}(t_n))\in \mathcal{A}(x(t_n)-\eta\mathcal{A}_{\eta}(x(t_n)+\frac{t_n}{\gamma}\dot{x}(t_n))),$$
and using Proposition \ref{maximal}, we have $ 0\in \mathcal{A}(\bar{x})$.
Consequently, $x(t)$ converges weakly to an element of zer$\mathcal{A}$.
\end{proof}

\begin{remark}  Here, we compare  the convergence  results of Theorem \ref{Th-s} on \eqref{DS} with the ones reported in \cite[Theorem 12 and Theorem 16]{Alecsa} on \eqref{im}, and \cite[Theorem 4.1]{Tan} on \eqref{ds}.
\begin{itemize}
\item{}\text{ Velocity }: a) The integral estimate $\int_{t_0}^{+\infty}t\|\dot{x}(t)\|^2dt<+\infty$ in Theorem \ref{Th-s}(i) on \eqref{DS} has been derived in  \cite[Theorem 16]{Alecsa} on \eqref{im} and   \cite[Theorem 4.1(i)]{Tan} on \eqref{ds}. b) The point estimate  $\|\dot{x}(t)\|=o(\frac{1}{t})$ in Theorem \ref{Th-s}(ii) on \eqref{DS} is  consistent with the one reported in \cite[Theorem 4.1(i)]{Tan} on \eqref{ds}, and improves the rate   $\|\dot{x}(t)\|=O(\frac{1}{t})$  reported in \cite[Theorem 16]{Alecsa} on \eqref{im}.

\item{}\text{ Yosida regularization  or gradient operator}:  a) The integral estimate 
$$\int_{t_0}^{+\infty} t\|\mathcal{A}_{\eta}(x(t)+\frac{t}{\gamma}\dot{x}(t))\|^2 dt <+\infty$$
in Theorem \ref{Th-s}(i) on \eqref{DS} is consistent with the estimate 
$$\int_{t_0}^{+\infty} t\|\mathcal{A}_{\eta}x(t)\|^2 dt <+\infty$$ reported in  \cite[Theorem 4.1(i)]{Tan} on \eqref{ds}, while  the integral estimate
$$\int_{t_0}^{+\infty}t^2\|\nabla f\left(x(t)+(\gamma+\frac{\beta}{t})\dot{x}(t)\right) \|^2dt<+\infty$$ 
was reported in  \cite[Theorem 12]{Alecsa} on \eqref{im}.

\item{} \text{ Trajectory }: The weak convergence of the trajectory to a solution established  in  Theorem \ref{Th-s}(iii) on \eqref{DS} was  reported in \cite[Theorem 24]{Alecsa} on \eqref{im}  and \cite[Theorem 4.1]{Tan}(iii) on \eqref{ds}.
\end{itemize}
\end{remark}

\section{Convergence analysis of inertial algorithm}\label{sec4}

In this section we will discuss the convergence properties of the algorithm  \eqref{ale}.

\begin{theorem}\label{kehe}
Suppose that $\mathcal{A}:\mathcal{H}\rightarrow 2^{\mathcal{H}}$ is a maximally $\rho$-comonotone operator with  $\text{ zer }\mathcal{A}\neq \emptyset$ and the parameters in \eqref{ale} satisfying  $\eta>\max\{-2\rho, 0\}$, $\alpha>\gamma+2$, $\gamma>\frac{\beta}{2(\rho+\eta)}$ and $\beta>0$. Let $(x_n,y_n,z_n)$ be the  sequence generated by \eqref{ale}. Then the following conclusions hold:
\begin{enumerate}
	\item [(i)] $\sum_{n=1}^{+\infty} n\|x_n-x_{n-1}\|^2<+\infty$,  $\sum_{n=1}^{+\infty} n\|\mathcal{A}_{\eta}(z_{n})\|^2<+\infty$ and 
$$\sum_{n=1}^{+\infty} n^2\|\mathcal{A}_{\eta}(z_{n})-\mathcal{A}_{\eta}(z_{n+1})\|^2<+\infty.$$
	\item [(ii)] $\|x_n-x_{n-1}\|=o(\frac{1}{n})$ and $\|\mathcal{A}_{\eta}(z_n)\|=o(\frac{1}{n})$ as $n\rightarrow +\infty$.
	\item [(iii)]  $x_n$ converges weakly, as $n\rightarrow +\infty$, to an element of $\mbox{zer} \mathcal{A}$. 
\end{enumerate}	
\end{theorem}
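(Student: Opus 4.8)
The plan is to mirror the continuous-time argument of Theorem \ref{Th-s} at the discrete level, replacing $\varepsilon_b(t)$ by a discrete Lyapunov sequence and the differentiation/integration steps by summation estimates. Fix $x^*\in\mbox{zer}\,\mathcal{A}$. For $b\in[0,\alpha-1]$ I would set
$$E_{n,b}:=\frac{1}{2}\|b(x_n-x^*)+n(x_n-x_{n-1})\|^2+\frac{b(\alpha-1-b)}{2}\|x_n-x^*\|^2,$$
the exact discrete counterpart of $\varepsilon_b$. The algebraic key is that the distinguished choice $b=\gamma$ yields $\gamma(x_n-x^*)+n(x_n-x_{n-1})=\gamma(z_n-x^*)$, so the leading term of $E_{n,\gamma}$ becomes $\frac{\gamma^2}{2}\|z_n-x^*\|^2$; this is what lets the $(\rho+\eta)$-cocoercivity of $\mathcal{A}_\eta$ (Proposition \ref{z}(ii)), together with $\mathcal{A}_\eta x^*=0$ (Remark \ref{av}), be applied at the point $z_n$ exactly as in \eqref{ee}.

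First I would derive a discrete descent inequality. Using the recursion in the form \eqref{al}, I expand $E_{n+1,\gamma}-E_{n,\gamma}$. Besides the first-order terms reproducing the continuous computation, the expansion produces genuinely discrete second-order remainders controlled through $\|x_{n+1}-x_n\|^2\le 2\|x_n-x_{n-1}\|^2+\frac{2\beta^2}{n^2}\|\mathcal{A}_\eta(z_n)\|^2$. Collecting terms, I expect an estimate of the shape
$$E_{n+1,\gamma}-E_{n,\gamma}+c_1\, n\|x_n-x_{n-1}\|^2+c_2\, n\|\mathcal{A}_\eta(z_n)\|^2\le r_n,$$
with $\sum_n r_n<+\infty$, where $c_1>0$ is forced by $\alpha>\gamma+2$ and $c_2>0$ is exactly where the new hypothesis $\gamma>\frac{\beta}{2(\rho+\eta)}$ enters, balancing the cocoercivity gain against the discretization defect. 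Summing and invoking Lemma \ref{des} then gives the first two summability bounds in (i) and, simultaneously, that $\lim_n E_{n,\gamma}$ exists.

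For the third bound and the $o(\frac{1}{n})$ rates I would introduce the auxiliary quantity $n^2\|(\alpha-1-\gamma)(x_n-x_{n-1})+\beta\mathcal{A}_\eta(z_n)\|^2$, imitating \eqref{es}--\eqref{e0}. Here the monotone-type inequality $\langle z_{n+1}-z_n,\mathcal{A}_\eta(z_{n+1})-\mathcal{A}_\eta(z_n)\rangle\ge(\rho+\eta)\|\mathcal{A}_\eta(z_{n+1})-\mathcal{A}_\eta(z_n)\|^2\ge 0$ plays the role of Proposition \ref{der}; it produces the term $\sum n^2\|\mathcal{A}_\eta(z_n)-\mathcal{A}_\eta(z_{n+1})\|^2$ as dissipation and, combined with the already-established $\sum n\|x_n-x_{n-1}\|^2<+\infty$ through a $\xi$-weighted Cauchy--Schwarz split as in \eqref{contra}, yields that $\lim_n n^2\|(\alpha-1-\gamma)(x_n-x_{n-1})+\beta\mathcal{A}_\eta(z_n)\|^2$ exists and equals $0$. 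Taking $b=0$ in $E_{n,b}$ and using $\sum n\|x_n-x_{n-1}\|^2<+\infty$ gives $\lim_n n^2\|x_n-x_{n-1}\|^2=0$; subtracting then yields $\|\mathcal{A}_\eta(z_n)\|=o(\frac{1}{n})$, proving (ii).

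Finally, (iii) follows from Opial's Lemma \ref{op}. Existence of $\lim_n\|x_n-x^*\|$ for every $x^*\in\mbox{zer}\,\mathcal{A}$ reduces, via the expanded form of $E_{n,b}$, to showing $n\langle x_n-x^*,x_n-x_{n-1}\rangle\to 0$, which is immediate from boundedness of $\{x_n\}$ (a consequence of $\lim_n E_{n,\gamma}$ existing) and $\|x_n-x_{n-1}\|=o(\frac{1}{n})$. For the cluster-point condition I note $z_n-x_n=\frac{n}{\gamma}(x_n-x_{n-1})\to 0$, so $x_n$ and $z_n$ share weak cluster points; passing to the limit in $\mathcal{A}_\eta(z_n)\in\mathcal{A}(z_n-\eta\mathcal{A}_\eta(z_n))$ with $\mathcal{A}_\eta(z_n)\to 0$ and applying Proposition \ref{maximal} shows every weak cluster point lies in $\mbox{zer}\,\mathcal{A}$. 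I expect the main difficulty to be the bookkeeping in the discrete descent inequality: unlike the continuous case the second-order remainders do not vanish and must be absorbed, and it is precisely their control that pins down the quantitative condition $\gamma>\frac{\beta}{2(\rho+\eta)}$.
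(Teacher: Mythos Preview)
Your proposal is correct and follows essentially the same route as the paper. Two small discrepancies worth flagging. First, the paper's discrete energy is
\[
\varepsilon_{b,n}=\tfrac{1}{2}\|b(x_n-x^*)+(n-\alpha)(x_n-x_{n-1})\|^2+\tfrac{b(\alpha-1-b)}{2}\|x_n-x^*\|^2+\tfrac{2\alpha-1-b}{2}\,n\|x_n-x_{n-1}\|^2,
\]
i.e.\ it uses $(n-\alpha)$ rather than $n$ and adds the extra term $\tfrac{2\alpha-1-b}{2}n\|x_n-x_{n-1}\|^2$; this choice absorbs the second-order remainders exactly, so no summable residual $r_n$ is needed. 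Second, the descent at $b=\gamma$ yields only an $O(1)$ coefficient $\bigl[\tfrac{\beta^2 n}{2(n-\alpha)}-\gamma\beta(\rho+\eta)\bigr]$ in front of $\|\mathcal{A}_\eta(z_n)\|^2$ (and this is precisely where $\gamma>\tfrac{\beta}{2(\rho+\eta)}$ enters), not $c_2 n$; hence the paper first gets $\sum n\|x_n-x_{n-1}\|^2<\infty$, then the $\omega_n$-analysis gives $\sum n\|\omega_n\|^2<\infty$, and $\sum n\|\mathcal{A}_\eta(z_n)\|^2<\infty$ follows by combining the two. The remainder of your plan (the auxiliary quantity $\omega_n$, the cocoercivity inequality for $z_{n+1}-z_n$, the $b=0$ case, and the Opial argument) matches the paper step for step.
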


\begin{proof}
Given $x^*\in \mbox{zer}\mathcal{A}$ and  $b\in [0, \alpha-1]$, define
\begin{eqnarray*}
\varepsilon_{b,n}:&=&\frac{1}{2}\|b(x_n-x^*)+(n-\alpha)(x_n-x_{n-1})\|^2+\frac{b(\alpha-1-b)}{2}\|x_n-x^*\|^2	\\
&&+\frac{2\alpha-1-b}{2}n\|x_n-x_{n-1}\|^2.
\end{eqnarray*}
It follows that
\begin{eqnarray}\label{eb}
&&\varepsilon_{b,n+1}-\varepsilon_{b,n}\nonumber\\
&=&\frac{1}{2}\|b(x_{n+1}-x^*)+(n+1-\alpha)(x_{n+1}-x_n)\|^2+\frac{b(\alpha-1-b)}{2}\|x_{n+1}-x^*\|^2\nonumber\\
&&-\frac{1}{2}\|b(x_n-x^*)+(n-\alpha)(x_n-x_{n-1})\|^2-\frac{b(\alpha-1-b)}{2}\|x_n-x^*\|^2\nonumber\\
&&+\frac{2\alpha-1-b}{2}(n+1)\|x_{n+1}-x_n\|^2-\frac{2\alpha-1-b}{2}n\|x_n-x_{n-1}\|^2\nonumber\\
&=&b(n+1-\alpha)\langle  x_{n+1}-x^*,x_{n+1}-x_n\rangle-b(n-\alpha)\langle  x_n-x^*,x_n-x_{n-1}\rangle\nonumber\\
&&+\frac{1}{2}[(n-b)(n+1)+{\alpha}^2]\|x_{n+1}-x_n\|^2-\frac{1}{2}[(n-1-b)n+{\alpha}^2]\|x_n-x_{n-1}\|^2\nonumber\\
&&+\frac{b(\alpha-1)}{2}\|x_{n+1}-x^*\|^2-\frac{b(\alpha-1)}{2}\|x_{n}-x^*\|^2.
\end{eqnarray}
From \eqref{ale}, we get
 $$(1-\frac{\alpha}{n})(x_n-x_{n-1})=x_{n+1}-x_n+\frac{\beta}{n}\mathcal{A}_{\eta}(z_n),$$
which yields
\begin{eqnarray*}
&&(n-\alpha)\langle x_n-x_{n-1}, x_{n+1}-x^*\rangle\\
&=&n\langle x_{n+1}-x_n, x_{n+1}-x^*	\rangle+\beta \langle \mathcal{A}_{\eta}(z_n), x_{n+1}-x^*\rangle\\
&=&(n+1-\alpha)\langle x_{n+1}-x_n, x_{n+1}-x^*\rangle +\beta \langle \mathcal{A}_{\eta}(z_n), x_{n+1}-x^*\rangle\\
&&+(\alpha-1) \langle x_{n+1}-x_n, x_{n+1}-x^*\rangle.
\end{eqnarray*}
This implies that
\begin{eqnarray}\label{l3}
&&(n+1-\alpha)\langle x_{n+1}-x_n, x_{n+1}-x^*\rangle-(n-\alpha)\langle x_{n}-x_{n-1}, x_{n}-x^*\rangle\nonumber\\
&=&(n-\alpha)\langle x_{n}-x_{n-1}, x_{n+1}-x_n\rangle-(\alpha-1)\langle x_{n+1}-x_n, x_{n+1}-x^*\rangle\nonumber\\
&&-\beta\langle \mathcal{A}_{\eta}(z_n), x_{n+1}-x^*\rangle\nonumber\\
&=&(n-\alpha)\langle x_{n}-x_{n-1}, x_{n+1}-x_n\rangle-\frac{\alpha-1}{2}\|x_{n+1}-x_n\|^2-\frac{\alpha-1}{2}\|x_{n+1}-x^*\|^2\nonumber\\
&&+\frac{\alpha-1}{2}\|x_{n}-x^*\|^2-\beta\langle \mathcal{A}_{\eta}(z_n), x_{n+1}-x^*\rangle.
\end{eqnarray}
From \eqref{al} and \eqref{ale}, we get
\begin{eqnarray}\label{fta-1}
&&(n-\alpha)\langle x_{n}-x_{n-1}, x_{n+1}-x_{n}\rangle\nonumber\\
&=&(n-\alpha)\langle x_{n}-x_{n-1}, (1-\frac{\alpha}{n})(x_n-x_{n-1})-\frac{\beta}{n}\mathcal{A}_{\eta}(z_n)\rangle\nonumber\\
&=&\frac{1}{n}(n-\alpha)^2\|x_n-x_{n-1}\|^2-\beta\langle (1-\frac{\alpha}{n})(x_n-x_{n-1}), \mathcal{A}_{\eta}(z_n)\rangle
\end{eqnarray}
and 
\begin{eqnarray}\label{fta-2}
&&-\beta\langle \mathcal{A}_{\eta}(z_n), x_{n+1}-x^*\rangle\nonumber\\
&=&-\beta\langle \mathcal{A}_{\eta}(z_n),x_{n}+(1-\frac{\alpha}{n})(x_n-x_{n-1})-\frac{\beta}{n}\mathcal{A}_{\eta}(z_n)-x^*\rangle\nonumber\\
&=&-\beta\langle \mathcal{A}_{\eta}(z_n),x_{n}+\frac{n}{\gamma}(x_n-x_{n-1})-x^* \rangle-\beta\langle \mathcal{A}_{\eta}(z_n),(1-\frac{\alpha}{n}-\frac{n}{\gamma})(x_n-x_{n-1})\rangle\nonumber\\
&&+\frac{{\beta}^2}{n}\|\mathcal{A}_{\eta}(z_n)\|^2\nonumber\\
&=&-\beta\langle \mathcal{A}_{\eta}(z_n),z_n-x^* \rangle-\beta\langle \mathcal{A}_{\eta}(z_n),(1-\frac{\alpha}{n}-\frac{n}{\gamma})(x_n-x_{n-1})\rangle+\frac{{\beta}^2}{n}\|\mathcal{A}_{\eta}(z_n)\|^2\nonumber\\
&\leq& -\beta\langle \mathcal{A}_{\eta}(z_n),(1-\frac{\alpha}{n}-\frac{n}{\gamma})(x_n-x_{n-1})\rangle+[\frac{{\beta}^2}{n}-\beta(\rho+\eta)]\|\mathcal{A}_{\eta}(z_n)\|^2,
\end{eqnarray} 
where the inequality is from the $(\rho+\eta)$-cocoercivity of $\mathcal{A}_{\eta}(z_n)$ (by $(ii)$ of  Proposition \ref{z}).
Substituting \eqref{fta-1} and \eqref{fta-2} into \eqref{l3}, we have
\begin{eqnarray*}
&&(n+1-\alpha)\langle x_{n+1}-x_n, x_{n+1}-x^*\rangle-(n-\alpha)\langle x_n-x_{n-1},x_n-x^*\rangle\\
&\leq&\frac{1}{n}(n-\alpha)^2\|x_n-x_{n-1}\|^2-\beta\langle \mathcal{A}_{\eta}(z_n), (1-\frac{\alpha}{n})(x_n-x_{n-1})\rangle\\
&&-\frac{\alpha-1}{2}\|x_{n+1}-x_n\|^2-\frac{\alpha-1}{2}\|x_{n+1}-x^*\|^2+\frac{\alpha-1}{2}\|x_n-x^*\|^2\\
&&-\beta\langle \mathcal{A}_{\eta}(z_n),(1-\frac{\alpha}{n}-\frac{n}{\gamma})(x_n-x_{n-1})\rangle+[\frac{{\beta}^2}{n}-\beta(\rho+\eta)]\|\mathcal{A}_{\eta}(z_n)\|^2.
\end{eqnarray*}	
This together with  \eqref{eb}) leads to
\begin{eqnarray*}
&&\varepsilon_{b,n+1}-\varepsilon_{b,n}\\
&\leq&\frac{1}{2}[(n-b)(n+1)+{\alpha}^2-b(\alpha-1)]\|x_{n+1}-x_n\|^2\nonumber\\
&&+[\frac{b}{n}(n-\alpha)^2-\frac{(n-1-b)n}{2}-\frac{{\alpha}^2}{2}]\|x_n-x_{n-1}\|^2\\
&&-b\beta(2-\frac{2\alpha}{n}-\frac{n}{\gamma})\langle\mathcal{A}_{\eta}(z_n),x_n-x_{n-1}\rangle+b[\frac{{\beta}^2}{n}-\beta(\rho+\eta)]\|\mathcal{A}_{\eta}(z_n)\|^2.
\end{eqnarray*}
Note that $(n-b)(n+1)+{\alpha}^2-b(\alpha-1)\leq \frac{n^3}{n-\alpha}-2bn$, because of 
\begin{eqnarray*}
&&\frac{n^3}{n-\alpha}-2bn-[(n-b)(n+1)+{\alpha}^2-b(\alpha-1)]\\
&=&(\alpha-1-b)n+\alpha b+\frac{{\alpha}^3}{n-\alpha}\geq 0.
\end{eqnarray*}
Therefore,
\begin{eqnarray}\label{eee}
&&\varepsilon_{b,n+1}-\varepsilon_{b,n}\nonumber\\
&\leq&\frac{1}{2}(\frac{n^3}{n-\alpha}-2bn)\|(1-\frac{\alpha}{n})(x_n-x_{n-1})-\frac{\beta}{n}\mathcal{A}_{\eta}(z_n)\|^2\nonumber\\
&&+[\frac{b}{n}(n-\alpha)^2-\frac{(n-1-b)n}{2}-\frac{{\alpha}^2}{2}]\|x_n-x_{n-1}\|^2\nonumber\\
&&-b\beta(2-\frac{2\alpha}{n}-\frac{n}{\gamma})\langle\mathcal{A}_{\eta}(z_n),x_n-x_{n-1}\rangle+[\frac{{\beta}^2b}{n}-b\beta(\rho+\eta)]\|\mathcal{A}_{\eta}(z_n)\|^2\nonumber\\
&=&-\frac{(\alpha-1-b)n+{\alpha}^2}{2}\|x_n-x_{n-1}\|^2+[\frac{{\beta}^2n}{2(n-\alpha)}-b\beta(\rho+\eta)]\|\mathcal{A}_{\eta}(z_n)\|^2\nonumber\\
&&+(\frac{b}{\gamma}-1)\beta n\langle\mathcal{A}_{\eta}(z_n),x_n-x_{n-1}\rangle.
\end{eqnarray}
Taking $b=\gamma$ in \eqref{eee}, we obtain
 \begin{eqnarray}\label{fta3}
&&\varepsilon_{\gamma,n+1}-\varepsilon_{\gamma,n}\nonumber\\
&\leq& -\frac{(\alpha-1-\gamma)n+{\alpha}^2}{2}\|x_n-x_{n-1}\|^2+[\frac{{\beta}^2n}{2(n-\alpha)}-\gamma\beta(\rho+\eta)]\|\mathcal{A}_{\eta}(z_n)\|^2.
\end{eqnarray}
Since  $\alpha>\gamma+2$ and $\gamma>\frac{\beta}{2(\rho+\eta)}$, it follows from \eqref{fta3} that the nonnegative sequence $\varepsilon_{\gamma,n}$ is nonincreasing when $n$ is large enough. Summing   \eqref{fta3} from $n=1$ to $n=+\infty$, we obtain 
\begin{equation}\label{xhe}
\sum_{n=1}^{+\infty} n\|x_{n}-x_{n-1}\|^2<+\infty.	
\end{equation}
Denote 
\begin{equation}\label{ftaw}
\omega_n:=(\alpha-1-\gamma)(x_n-x_{n-1})+\beta  \mathcal{A}_{\eta}(z_{n}).
\end{equation}
It follows from  \eqref{al} and  \eqref{ale} that
\begin{eqnarray}\label{n1}
&&n^2\|\omega_{n+1}\|^2\nonumber\\
&=&n^2\|(\alpha-1-\gamma)(1-\frac{\alpha}{n})(x_n-x_{n-1})-(\alpha-1-\gamma)\frac{\beta}{n}\mathcal{A}_{\eta}(z_n)+\beta \mathcal{A}_{\eta}(z_{n+1})\|^2\nonumber\\
&=&n^2\|\frac{n-\alpha}{n}\omega_n-[\frac{n\beta-\beta-\gamma\beta}{n}\mathcal{A}_{\eta}(z_n)-\beta \mathcal{A}_{\eta}(z_{n+1})]\|^2\nonumber\\
&=&(n-\alpha)^2\|\omega_n\|^2+n^2\|\frac{n\beta-\beta-\gamma\beta}{n}\mathcal{A}_{\eta}(z_n)-\beta\mathcal{A}_{\eta}(z_{n+1})\|^2\nonumber\\
&&-2n(n-\alpha)\langle \omega_n,\frac{n\beta-\beta-\gamma\beta}{n}\mathcal{A}_{\eta}(z_n)-\beta\mathcal{A}_{\eta}(z_{n+1})\rangle.
\end{eqnarray}
Using \eqref{ale}, we obtain
\begin{eqnarray*}
&&\gamma(z_{n}-z_{n+1})\nonumber\\
&=&\gamma x_{n}+n(x_n-x_{n-1})-\gamma x_{n+1}-(n+1)(x_{n+1}-x_n)\nonumber\\
&=&n(x_n-x_{n-1})-(n+1+\gamma)(x_{n+1}-x_n)\nonumber\\
&=&n(x_n-x_{n-1})-(n+1+\gamma)((1-\frac{\alpha}{n})(x_n-x_{n-1})-\frac{\beta}{n}\mathcal{A}_{\eta}(z_n))\nonumber\\
&=&(\alpha-1-\gamma)(x_n-x_{n-1})+\frac{(1+\gamma)\alpha}{n}(x_n-x_{n-1})+\frac{(n+1+\gamma)\beta}{n}\mathcal{A}_{\eta}(z_n).
\end{eqnarray*}
This together with \eqref{ftaw} implies
\begin{equation}\label{omeg}
\omega_n=\gamma(z_n-z_{n+1})-\frac{(1+\gamma)\alpha}{n}(x_n-x_{n-1})-\frac{(1+\gamma)\beta}{n}\mathcal{A}_{\eta}(z_n).
\end{equation}
It follows that
\begin{eqnarray*}
&&-\langle \omega_n, \frac{n\beta-\beta-\gamma\beta}{n}\mathcal{A}_{\eta}(z_n)-\beta	\mathcal{A}_{\eta}(z_{n+1})\rangle\\
&=&-\langle \omega_n, \beta\mathcal{A}_{\eta}(z_n)-\beta\mathcal{A}_{\eta}(z_{n+1})\rangle+\langle \omega_n, \frac{(1+\gamma)\beta}{n}\mathcal{A}_{\eta}(z_n)\rangle\\
&=&-\langle \gamma(z_n-z_{n-1})-\frac{(1+\gamma)\alpha}{n}(x_n-x_{n-1})-\frac{(1+\gamma)\beta}{n}\mathcal{A}_{\eta}(z_n),\\
&&\beta\mathcal{A}_{\eta}(z_n)-\beta\mathcal{A}_{\eta}(z_{n+1})\rangle+\frac{1+\gamma}{n}\langle \omega_n,\beta\mathcal{A}_{\eta}(z_n)\rangle\\
&=&-\langle \gamma(z_n-z_{n+1}),\beta\mathcal{A}_{\eta}(z_n)-\beta\mathcal{A}_{\eta}(z_{n+1})\rangle\\
&&+\frac{1+\gamma}{n}\langle\alpha(x_n-x_{n-1})+\beta\mathcal{A}_{\eta}(z_n),\beta\mathcal{A}_{\eta}(z_n)-\beta\mathcal{A}_{\eta}(z_{n+1})\rangle\\
&&+\frac{1+\gamma}{n}\langle \omega_n,\omega_n\rangle-\frac{1+\gamma}{n}\langle \omega_n,(\alpha-1-\gamma)(x_n-x_{n-1})\rangle\\
&\leq&-\beta\gamma(\rho+\eta)\|\mathcal{A}_{\eta}(z_n)-\mathcal{A}_{\eta}(z_{n+1})\|^2+\frac{1+\gamma}{n}\langle \omega_n,\beta\mathcal{A}_{\eta}(z_n)-\beta\mathcal{A}_{\eta}(z_{n+1})\rangle\\
&&+\frac{(1+\gamma)^2}{n}\langle x_n-x_{n-1},\beta\mathcal{A}_{\eta}(z_n)-\beta\mathcal{A}_{\eta}(z_{n+1})\rangle\\
&&+\frac{1+\gamma}{n}\|\omega_n\|^2-\frac{1+\gamma}{n}\langle \omega_n,(\alpha-1-\gamma)(x_n-x_{n-1})\rangle
\end{eqnarray*}
where the second equation uses \eqref{omeg} and the inequality is from the $(\rho+\eta)$-cocoercivity of $\mathcal{A}_{\eta}$.
This together with \eqref{n1} leads to
\begin{eqnarray}\label{thf1}
&&n^2\|\omega_{n+1}\|^2-(n-1)^2\|\omega_n\|^2\nonumber\\
&\leq&[(4+2\gamma-2\alpha)n+{\alpha}^2-2\alpha(1+\gamma)-1]\|\omega_n\|^2\nonumber\\
&&+n^2\|\frac{n\beta-\beta-\gamma\beta}{n}\mathcal{A}_{\eta}(z_n)-\beta \mathcal{A}_{\eta}(z_{n+1})\|^2\nonumber\\
&&-2n(n-\alpha)\gamma\beta(\rho+\eta)\|\mathcal{A}_{\eta}(z_n)-\mathcal{A}_{\eta}(z_{n+1})\|^2\nonumber\\
&&+2(n-\alpha)(1+\gamma)\beta\langle \omega_n,\mathcal{A}_{\eta}(z_n)-\mathcal{A}_{\eta}(z_{n+1})\rangle \nonumber\\
&&+2(n-\alpha)(1+\gamma)^2\beta\langle x_n-x_{n-1},\mathcal{A}_{\eta}(z_n)-\mathcal{A}_{\eta}(z_{n+1})\rangle\nonumber \\
&&-2(n-\alpha)(1+\gamma)\langle \omega_n, (\alpha-1-\gamma)(x_n-x_{n-1})\rangle.
\end{eqnarray}
Utilizing the Cauchy-Schwarz inequality, we have for any $\delta_1,\delta_2>0$ 
$$2\langle x_n-x_{n-1},\mathcal{A}_{\eta}(z_n)-\mathcal{A}_{\eta}(z_{n+1})\rangle\leq \|\mathcal{A}_{\eta}(z_n)- \mathcal{A}_{\eta}(z_{n+1})\|^2+\|x_n-x_{n-1}\|^2,$$
$$2\langle \omega_n,\mathcal{A}_{\eta}(z_n)-\mathcal{A}_{\eta}(z_{n+1})\rangle\leq \delta_2\|\omega_n\|^2+\frac{1}{\delta_2}\|\mathcal{A}_{\eta}(z_n)- \mathcal{A}_{\eta}(z_{n+1})\|^2$$
and 
$$2\langle \omega_n, (\alpha-1-\gamma)(x_n-x_{n-1})\rangle\\\leq \delta_1\|\omega_n\|^2+\frac{1}{\delta_1}\|(\alpha-1-\gamma)(x_n-x_{n-1})\|^2.$$
Substituting the last three inequalities into \eqref{thf1}, we obtain
\begin{eqnarray}\label{ehe}
&&n^2\|\omega_{n+1}\|^2-(n-1)^2\|\omega_n\|^2\nonumber\\
&\leq&M_1(n)\|\omega_n\|^2+M_2(n)\|\mathcal{A}_{\eta}(z_n)- \mathcal{A}_{\eta}(z_{n+1})\|^2 +M_3(n)\|x_n-x_{n-1}\|^2, 
\end{eqnarray}
where $$M_1(n):=[(4+2\gamma-2\alpha)+(1+\gamma)(\delta_1+\beta\delta_2)]n+{\alpha}^2-\alpha(1+\gamma)(\delta_1+\beta\delta_2),$$
$$M_2(n):=-2n(n-\alpha)(\rho+\eta)\beta\gamma+{\beta}^2 n^2+(n-\alpha)(1+\gamma)\beta(\frac{1}{\delta_2}+1+\gamma),$$
 and 
 $$M_3(n):=(n-\alpha)(1+\gamma)[\frac{(\alpha-1-\gamma)^2}{\delta_1}+(1+\gamma)\beta].$$
Since  $\alpha>\gamma+2$ and   $\gamma>\frac{\beta}{2(\rho+\eta)}$, we can choose  $\delta_1$ and $\delta_2$ such that $\delta_1+\beta\delta_2<\frac{2\alpha-2\gamma-4}{1+\gamma}$. As a result,
$M_1(n)\leq0$ and $M_2(n)\leq0$ when $n$ is large enough.
By neglecting the first  two nonpositive terms in \eqref{ehe}, we obtain
$$n^2\|\omega_{n+1}\|^2-(n-1)^2\|\omega_n\|^2\leq M_3(n)\|x_n-x_{n-1}\|^2.$$
By using \eqref{xhe} and Lemma \ref{des},  we have $\lim_{n\rightarrow +\infty}n^2\|\omega_n\|^2$ exists.  From $(\ref{xhe})$ and $(\ref{ehe})$, we get 
\begin{equation}\label{5}
\sum_{n=1}^{+\infty} n\|\omega_n\|^2<+\infty	
\end{equation}
and $$\sum_{n=1}^{+\infty} n^2\|\mathcal{A}_{\eta}(z_n)- \mathcal{A}_{\eta}(z_{n+1})\|^2<+\infty.$$
Thanks to the existence of $\lim_{n\rightarrow +\infty}n^2\|\omega_n\|^2$ and $(\ref{5})$, we obtain
\begin{equation}\label{0}
\lim_{n\rightarrow +\infty}n^2\|\omega_n\|^2=0.	
\end{equation}
From $(\ref{xhe})$ and $(\ref{5})$, we derive that
\begin{eqnarray*}
&&\sum_{n=1}^{+\infty} n\|\beta\mathcal{A}_{\eta}(z_n)\|^2\\
&=&\sum_{n=1}^{+\infty} n\|\omega_n-(\alpha-1-\gamma)(x_n-x_{n-1})\|^2\\
&\leq&2\sum_{n=1}^{+\infty} n\|\omega_n\|^2+2\sum_{n=1}^{+\infty} n\|(\alpha-1-\gamma)(x_n-x_{n-1})\|^2\\
&< & +\infty.
\end{eqnarray*}
This, combined with $(\ref{xhe})$, yields
\begin{eqnarray}\label{khe}
&&\sum_{n=1}^{+\infty} n|\langle \mathcal{A}_{\eta}(z_n),x_n-x_{n-1}\rangle|\nonumber\\
&\leq&\sum_{n=1}^{+\infty}\frac{n}{2}\| \mathcal{A}_{\eta}(z_n)\|^2+\sum_{n=1}^{+\infty}\frac{n}{2}\|x_n-x_{n-1}\|^2\nonumber\\
&<&+\infty.	
\end{eqnarray}
The proof of $(i)$ is finished.

Neglecting the nonpositive terms in the right side of $(\ref{eee})$, we have
\begin{eqnarray*}
\varepsilon_{b,n+1}-\varepsilon_{b,n}
&\leq &\beta n(\frac{b}{\gamma}-1)\langle \mathcal{A}_{\eta}(z_n),x_n-x_{n-1}\rangle\\
&\leq& \beta n|\frac{b}{\gamma}-1||\langle \mathcal{A}_{\eta}(z_n),x_n-x_{n-1}\rangle|.	
\end{eqnarray*} 
Applying Lemma $\ref{des}$ to the above inequality with $a_n=\varepsilon_{b,n}$ and $b_n=n|\langle \mathcal{A}_{\eta}(z_n),x_n-x_{n-1}\rangle|$, where $b_n$ is summable because of $(\ref{khe})$, we know $\lim_{n\rightarrow +\infty}\varepsilon_{b,n}$ exists for any $b\in [0,\alpha-1]$. In particular, taking $b=0$, we have $\lim_{n\rightarrow +\infty}\frac{1}{2}(n-\alpha)^2\|x_n-x_{n-1}\|^2$ exists. This together with  $(\ref{xhe})$ yields
\begin{equation}\label{eq0}
\lim_{n\rightarrow +\infty}\frac{1}{2}(n-\alpha)^2\|x_n-x_{n-1}\|^2=0.	
\end{equation}
Furthermore, 
\begin{eqnarray*}
&&n^2\| \beta\mathcal{A}_{\eta}(z_n)\|^2\\
&=&n^2\|\omega_n-(\alpha-1-\gamma)(x_n-x_{n-1})\|^2\\
&\leq&2n^2\|\omega_n\|^2+2n^2\|(\alpha-1-\gamma)(x_n-x_{n-1})\|^2.
\end{eqnarray*}
This together with $(\ref{0})$ and $(\ref{eq0})$ yields
$$\lim_{n\rightarrow +\infty}n^2\|\mathcal{A}_{\eta}(z_n)\|^2=0.$$
This proves  $(ii)$.

Next, we prove $(iii)$ by using the Opial's lemma (Lemma \ref{op}). By definition, 
\begin{eqnarray*}
\varepsilon_{b,n}
&=&\frac{1}{2}(n-\alpha)^2\|x_n-x_{n-1}\|^2+\frac{2\alpha-1-b}{2}n\|x_n-x_{n-1}\|^2\\
&&+b(n-\alpha)\langle x_n-x^*,x_n-x_{n-1}\rangle+\frac{(\alpha-1)b}{2}\|x_n-x^*\|^2\\
&=&\frac{1}{2}[n^2-(1+b)n+{\alpha}^2]\|x_n-x_{n-1}\|^2+b(n-\alpha)\langle x_n-x_{n-1},x_n-x^*\rangle\\
&&+\frac{(\alpha-1)b}{2}\|x_n-x^*\|^2.
\end{eqnarray*}
In order to prove the existence of $\lim_{n\rightarrow +\infty}\|x_n-x^*\|^2$, it is sufficient  to show  
$$(n-\alpha)\langle x_n-x_{n-1},x_n-x^*\rangle\to 0\quad \text{ as }\quad n\to+\infty,$$  
which follows from $(\ref{eq0})$, the  boundedness of $\|x_n-x^*\|$ (guaranteed by the boundedness of $\varepsilon_{b,n}$) and
$$(n-\alpha)|\langle x_n-x_{n-1},x_n-x^*\rangle|\leq (n-\alpha)\|x_n-x_{n-1}\|\cdot\|x_n-x^*\|.$$	
To finish the proof, we need to prove that any weak  cluster point  $\hat{x}$ of $\{x_{n}\}$ belongs to $\text{ zer }\mathcal{A}$. Let $\{x_{n_k}\}$ be a subsequence of $\{x_{n}\}$ such that it  weakly converges to $\hat{x}$ as $k\to+\infty$. Invoking $(\ref{ale})$ and $(\ref{eq0})$, we infer that $z_{n_k}$ weakly converges to $\hat{x}$ as $k\to+\infty$. According to $(ii)$, we have $\mathcal{A}_{\eta}(z_{n_k})\rightarrow 0$ as $k\to+\infty$. Since
$$\mathcal{A}_{\eta} (z_{n_k})\in \mathcal{A}(z_{n_k}-\eta\mathcal{A}_{\eta}(z_{n_k})),$$ 
 by Proposition $\ref{maximal}$ we obtain $0\in \mathcal{A}(\hat{x})$.
\end{proof}

\begin{remark}  Although the algorithms \eqref{ale}  and \eqref{Al-cns},  resulting  respectively from \eqref{DS} and  \eqref{ds} via time discretization,  are different  in formulations, they exhibit similar convergence properties as reported in  Theorem \ref{kehe} and \cite[Theorem 5.1]{Tan}. In Section \ref{sec5}, we will show that  the algorithm \eqref{ale} outperforms over the algorithm \eqref{Al-cns} in numerical experiments.
\end{remark}

\section{Numerical experiments}\label{sec5}

In this section, we illustrate the validity of the  system $(\ref{DS})$ and the algorithm $(\ref{al})$  by two examples. The simulations are conducted in Matlab (version 9.4.0.813654)R2018a. All the numerical procedures are performed on a personal computer with Inter(R) Core(TM) i7-4600U, CORES 2.69GHz and RAM 8.00GB.

\begin{example}\label{Ex1} Consider the following inclusion problem
$$0\in \mathcal{A}(x),$$
 where
$$\mathcal{A}=\begin{pmatrix}
5.7943 & -10.8168 & 10.7544\\
-10.8168 & 65.5739 & -28.2603\\
10.7544 & -28.2603 & 35.6622
\end{pmatrix}$$ 
is a randomly generated semi-positive definite matrix of order $3\times3$. Obviously, it is a maximally monotone operator and it has a single zero $x^*=(0,0,0)$.	
\end{example}
By this example, we illustrate  the theoretical results on  $(\ref{DS})$, compared  with the following implicit Newton-like inertial dynamical system
\begin{equation}\label{b}
\ddot{x}(t)+\alpha \dot{x}(t)+\mathcal{A}(x(t)+\gamma\dot{x}(t))=0,	
\end{equation}
which is just  the system \eqref{B} proposed by Adly et al. \cite{Adly} with
$f\equiv 0$,  $B=\mathcal{A}$, and $\beta_b=\beta_f:=\gamma>0$. 

Figure \ref{figure1} depicts the asymptotical behaviors of  the rescaled iterative errors $\text{ lg }\|x(t)-x^*\|$, $\text{ lg } \|\dot{x}(t)\|$, $\text{ lg }\|\mathcal{A}_{\eta}(x(t)+\frac{t}{\gamma}\dot{x}(t))\|$ of \eqref{DS} with  $\eta=2$ and $\text{ lg }\|\mathcal{A}(x(t)+\gamma\dot{x}(t))\|$ of \eqref{b}  under different choices of the parameters $\alpha, \beta, \gamma$. Figure \ref{figure11} depicts the asymptotical behaviors of the trajectory $x(t)$ generated by \eqref{DS} and \eqref{b} with  $\alpha=15$ and $\beta=\gamma=10$. Here,  all  the dynamical systems are solved numerically  by the ode45 function in Matlab on the interval $[0.1,100]$ with  a same initial value  $x(t_0)=(1,-10,-20)$ and $\dot{x}(t_0)=(1,1,1)$. As shown in Figure \ref{figure1} and Figure \ref{figure11}, the system \eqref{DS} outperforms  over \eqref{b}.

\begin{example}\label{Ex2} Consider the following inclusion problem
$$0\in \mathcal{A}(x),$$
 where $\mathcal{A}=
\begin{pmatrix}
-\frac{2}{5} & \frac{4}{5}\\
-\frac{4}{5} & -\frac{2}{5}
\end{pmatrix}$
is a  maximally $\rho$-comonotone operator with $\rho= -\frac{1}{2}$. It is easily verified that $\mathcal{A}$ has a single zero $x^*=(0,0)$.	
\end{example}
 
This example is taken from \cite[Example 6.2]{Tan},  which was used by Tan et al. \cite{Tan}  to test the system \eqref{ds} and its resulting algorithm \eqref{Al-cns} for solving the maximally comonotone inclusion problem. Here, we will use it to test the system \eqref{DS} and the algorithm \eqref{al}.

Figure \ref{figure2} depicts the asymptotical behavior of the trajectory $x(t)$,  the velocity $\dot{x}(t)$ and the Yosida regularization $\mathcal{A}_{\eta}(x(t)+\frac{t}{\gamma}\dot{x}(t))$ generated by $\eqref{DS}$   with parameters $\alpha=5$, $\beta=2$, $\gamma=2.5$ and $\eta=2$, where it is solved numerically by  the ode45 function in Matlab on the interval $[0.1,100]$ with the initial value $x(t_0)=(10,-10)$ and $\dot{x}(t_0)=(1,1)$. In Figure \ref{figure2}, the blue curve represents the first component of $x(t)$, $\dot{x}(t)$ and $\mathcal{A}_{\eta}(x(t)+\frac{t}{\gamma}\dot{x}(t))$ and the red represents the second component of these. It supports the theoretical results of Theorem \ref{Th-s}.

Next, we test the algorithm \eqref{al} on Example \ref{Ex2}, compared with the Halpern-type proximal point algorithm (HPPA) in \cite{Kohlenbach}, the Optimized Halpern's Method (OHM) in \cite{Cai},  and  the algorithm \eqref{Al-cns} proposed  by Tan et al. \cite{Tan}.

Figure \ref{figure3} displays the profiles of $\text{ lg }\|x_{k}-x^*\|$ for the sequences $x_n$ generated by HPPA with $\alpha_n=\frac{1}{n+1}$, $\gamma_n\equiv 2$ and $\mu=(1,1)$, OHM in \cite{Cai} with $\beta_n=\frac{1}{n+1}$ and $\omega_0=(1,1)$, the algorithm \eqref{Al-cns} with  $\alpha=10$, $\beta=4$ and $\eta=2$,   and the algorithm \eqref{al} with $\alpha=10$, $\beta=4$, $\gamma=7$ and $\eta=2$,  under the same stopping criteria $\|x_k-x^*\|\leq 10^{-7}$ with a same value $x_1=x_0=(1,1)$.  The numerical results support the theoretical findings of Theorem \ref{kehe} and shows that  the algorithm \eqref{al} performs better than other algorithms.

\begin{figure*}[h]
 \centering
 {
  \begin{minipage}[t]{0.31\textwidth}
   \centering
   \includegraphics[width=1.8in]{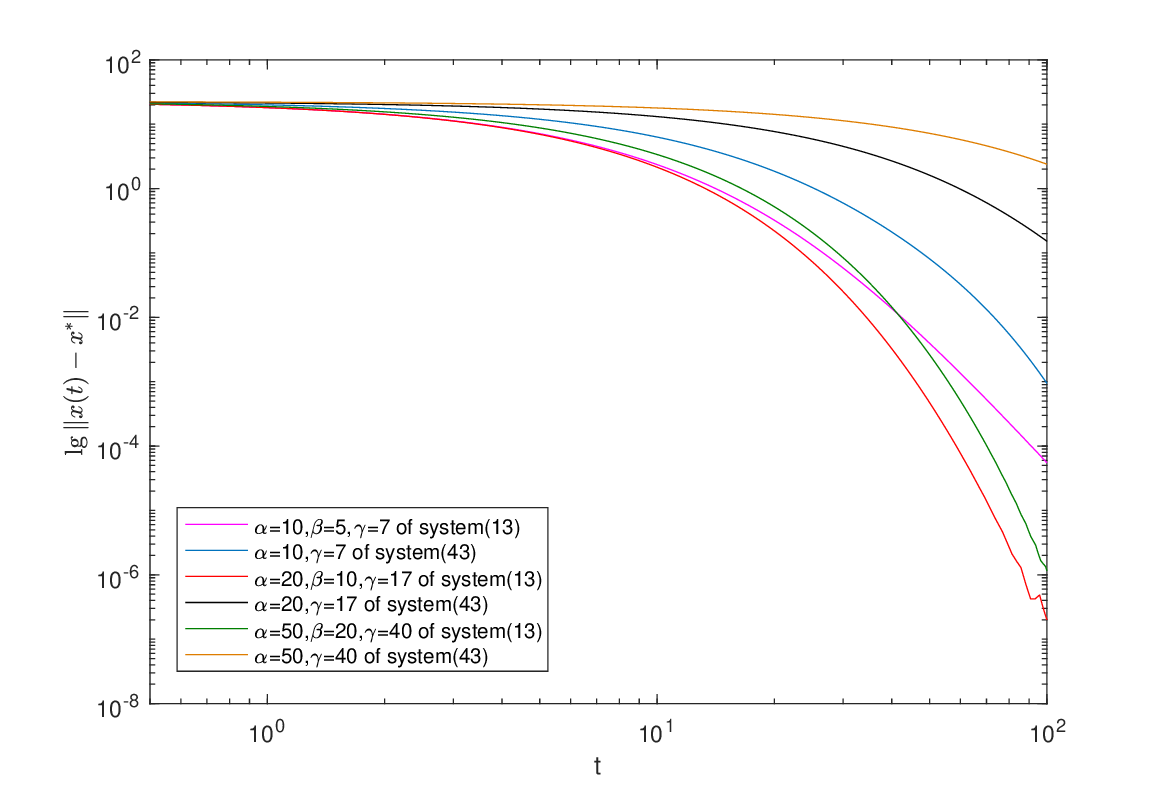}
  \end{minipage}
 }
 {
  \begin{minipage}[t]{0.31\textwidth}
   \centering
   \includegraphics[width=1.8in]{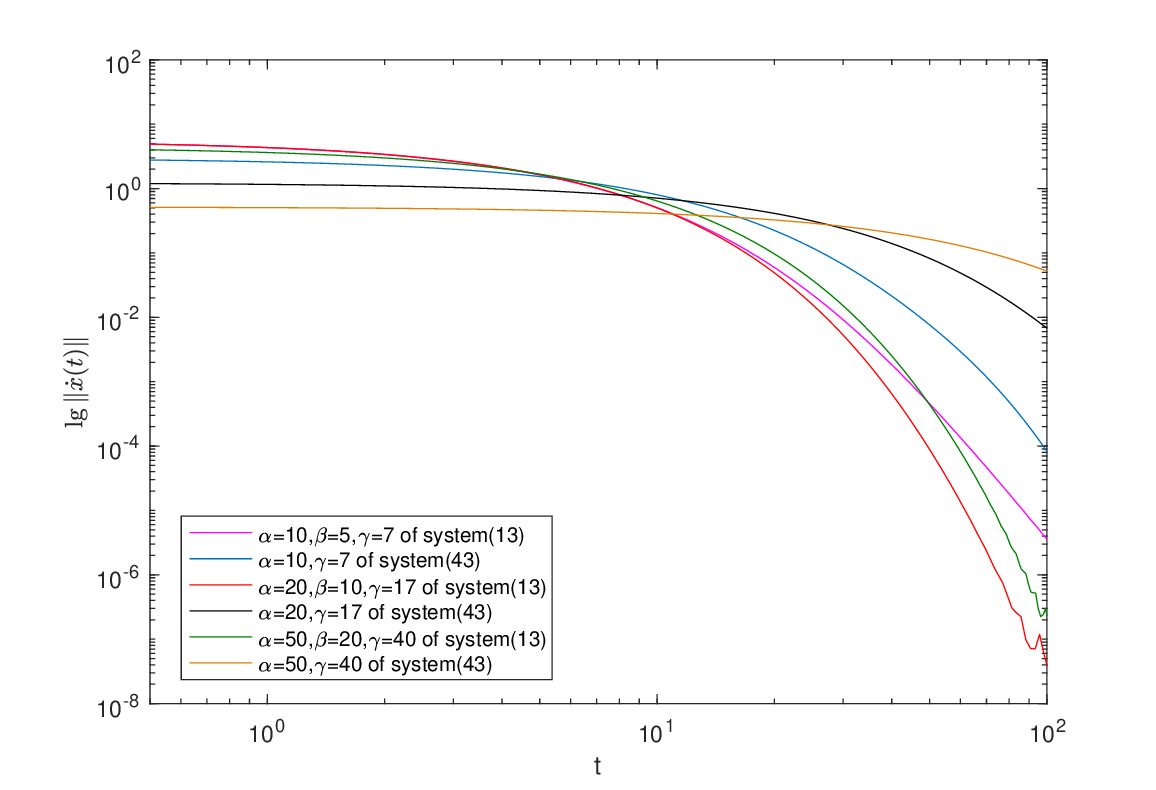}
  \end{minipage}
 }
 {
  \begin{minipage}[t]{0.31\textwidth}
   \centering
   \includegraphics[width=1.8in]{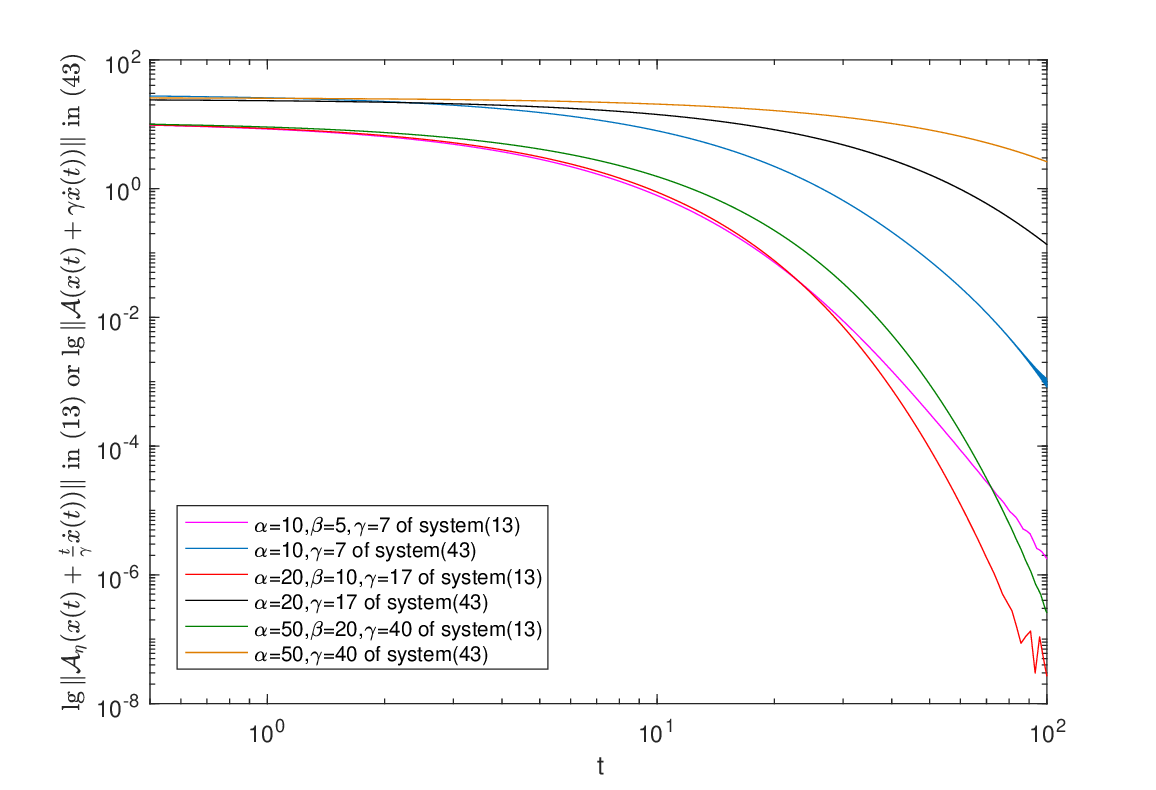}
  \end{minipage}
 }
 \caption{Rescaled iteration errors $\text{ lg } \|x(t)-x^*\|$, $\text{ lg } \|\dot{x}(t)\|$, $\text{ lg }\|\mathcal{A}_{\eta}(x(t)+\frac{t}{\gamma}\dot{x}(t))\|$ of \eqref{DS} and $\text{ lg }\|\mathcal{A}(x(t)+\gamma\dot{x}(t))\|$ of \eqref{b} in Example \ref{Ex1}}
 \label{figure1}
 \centering
\end{figure*}

\begin{figure}[htp]
\begin{center}
\includegraphics[width=0.6\textwidth]{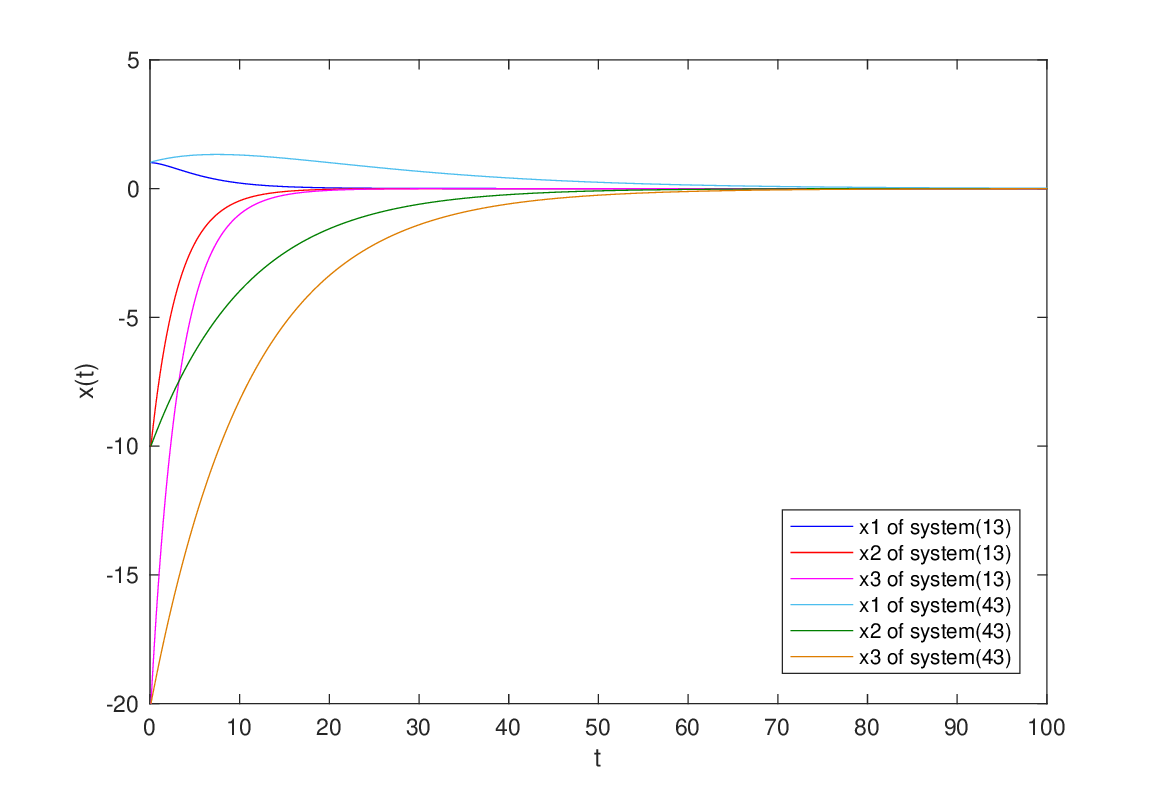}
\caption{Transient behaviors of  the trajectories $x(t)$ for \eqref{DS} and \eqref{b} in Example \ref{Ex1}}
\label{figure11}
\end{center}
\end{figure}

\begin{figure*}[h]
 \centering
 {
  \begin{minipage}[t]{0.31\linewidth}
   \centering
   \includegraphics[width=1.8in]{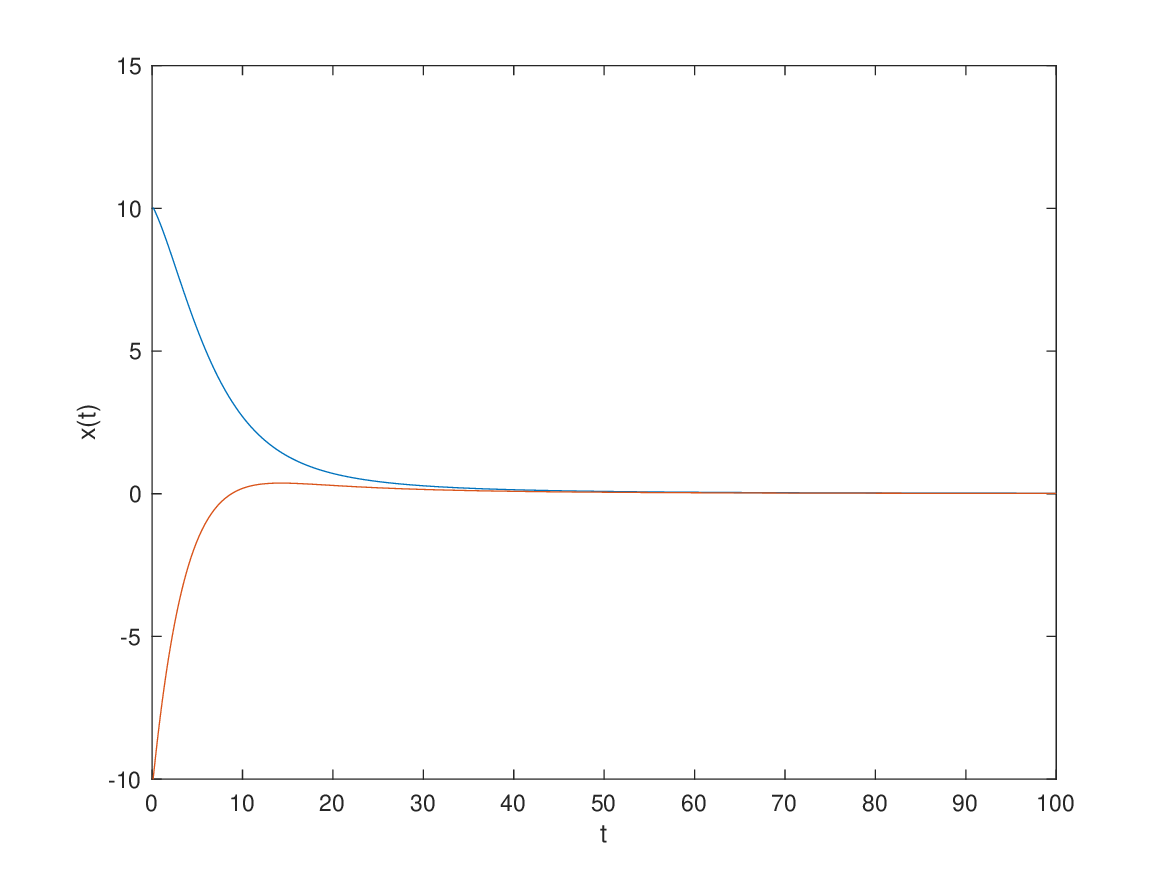}
  \end{minipage}
 }
 {
  \begin{minipage}[t]{0.31\linewidth}
   \centering
   \includegraphics[width=1.8in]{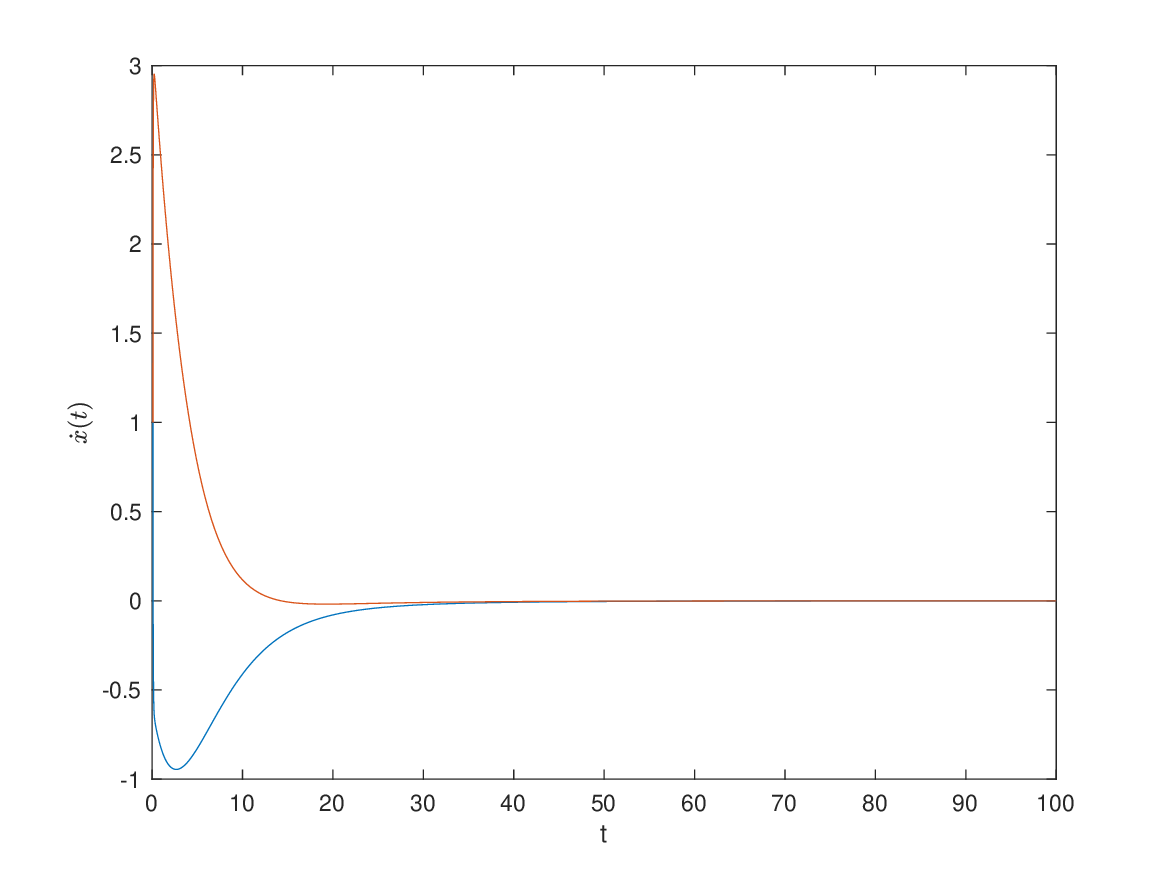}
  \end{minipage}
 }
 {
  \begin{minipage}[t]{0.31\linewidth}
   \centering
   \includegraphics[width=1.8in]{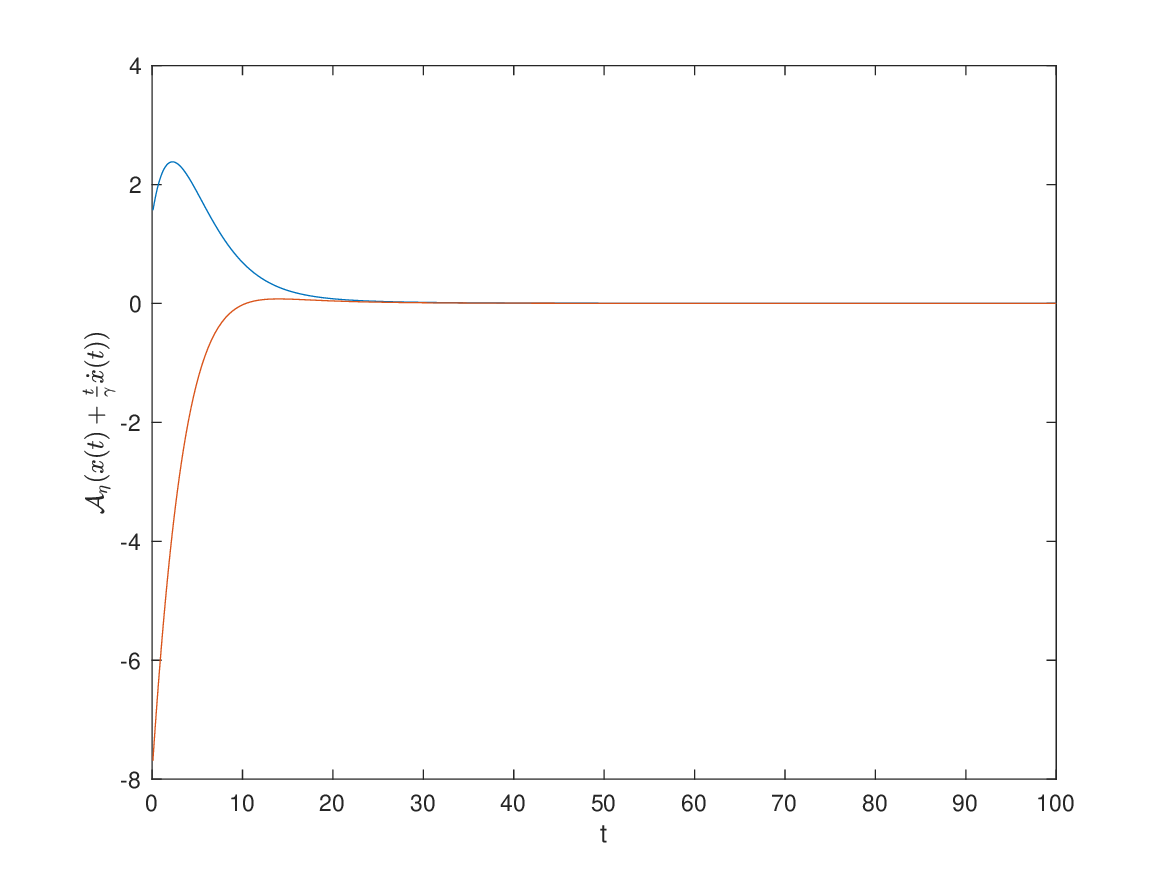}
  \end{minipage}
 }
 \caption{Transient behavior of the trajectory $x(t)$, $\dot{x}(t)$, $\mathcal{A}_{\eta}(x(t)+\frac{t}{\gamma}\dot{x}(t))$ of \eqref{DS} in Example \ref{Ex2}}
 \label{figure2}
 \centering
\end{figure*}

\begin{figure}[htp]
\begin{center}
\includegraphics[width=0.6\textwidth]{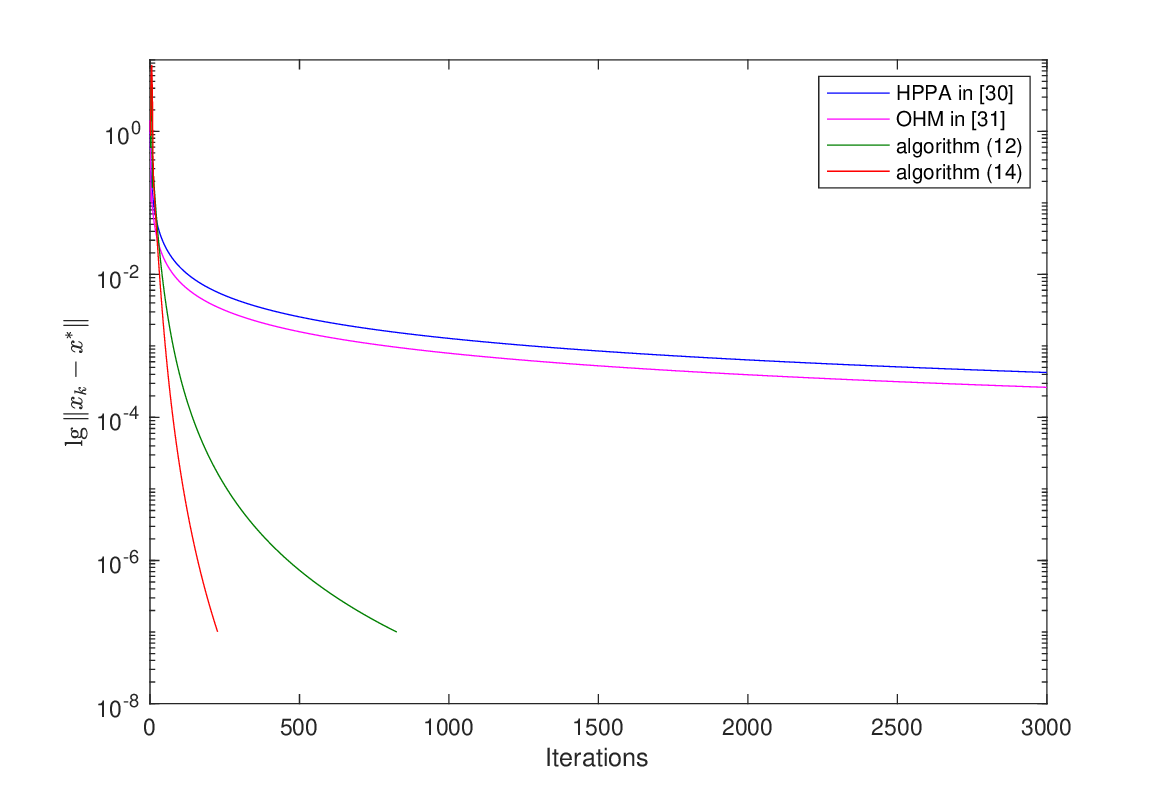}
\caption{Rescaled iteration errors $\text{ lg } \|x_n-x^*\|$ for \eqref{al}, \eqref{Al-cns}, HPPA  and OHM in Example \ref{Ex2}}
\label{figure3}
\end{center}
\end{figure}

\section{Conclusion}\label{sec6}

In this paper we propose an implicit Newton-like inertial dynamical system  governed by a maximally comonotone operator, which  is similar to  the inertial dynamical system \eqref{ds} recently considered by Tan et al. \cite{Tan} but includes implicitly the Newton-like correction term, for solving a maximally comonotone inclusion problem. We show that the proposed system exhibits fast convergence properties, in terms of pointwise estimates  and integral estimates  for the velocity and the value of  the associated Yosida regularization operator along the trajectory, and  the weak convergence of the trajectory to a zero of the underlying operator. By discretizing  our system, we develop a new inertial algorithm for finding a zero of the maximally comonotone operator. Contrast to the algorithm \eqref{Al-cns} due to Tan et al. \cite{Tan}, the algorithm under consideration enjoys similar theoretical convergence properties, but better performance in numerical experiments.


\section*{Declarations}

{\bf Conflict of interest} No potential conflict of interest was reported by the authors.


\bibliography{sn-bibliography}

\end{document}